\documentclass[11pt]{amsart}

\usepackage[T1]{fontenc}
\usepackage{lmodern}
\usepackage[margin=1in]{geometry}
\usepackage{amsmath,amssymb,mathtools}
\usepackage{microtype}
\usepackage{mathtools}
\mathtoolsset{showonlyrefs=true}
\usepackage{xcolor}
\usepackage[
  colorlinks=true,
  linkcolor=blue!48!black,
  citecolor=blue!48!black,
  urlcolor=blue!48!black,
  pdftitle={Fourier-invariant functions with dense zero sets}
]{hyperref}

\numberwithin{equation}{section}

\newtheorem{theorem}{Theorem}[section]

\newtheorem{lemma}[theorem]{Lemma}

\newcommand{\R}{\mathbb R}
\newcommand{\cH}{\mathcal H}
\newcommand{\dd}{\,\mathrm d}
\newcommand{\e}{\mathrm e}
\newcommand{\norm}[1]{\left\lVert #1\right\rVert}
\newcommand{\ip}[2]{\left\langle #1,#2\right\rangle}
\newcommand{\off}{\operatorname{off}}
\DeclareMathOperator{\sech}{sech}

\title[Fourier-invariant functions with dense zero sets] 
{Fourier-invariant functions with dense zero sets}

\begin{document}
\author[A. Bondarenko]{Andriy Bondarenko}
\address{Department of Mathematical Sciences \\ Norwegian University of Science and Technology \\ NO-7491 Trondheim \\ Norway}
\email{andriybond@gmail.com}

\author[K. Seip]{Kristian Seip}
\address{Department of Mathematical Sciences, Norwegian University of Science and Technology (NTNU), 7491 Trondheim, Norway}
\email{kristian.seip@ntnu.no}

\thanks{The authors were supported in part by Grant 334466 of the
Research Council of Norway.}
\begin{abstract}
For every \(0\leq\beta\leq1/2\), we construct a nonzero real-valued continuous 
function $f_\beta$ in $L^1(\mathbb R)\cap L^2(\mathbb R)$
such that \(\widehat {f}_\beta=f_\beta\) and \(f_\beta(\sqrt{n}/[\log(e+n)]^{\beta})=0\) for all \(n\geq0\). The case $\beta=0$ settles in the negative a question raised by Radchenko and Viazovska regarding their Fourier interpolation formula. The construction uses a scale of reproducing kernel Hilbert spaces generated by
the Fourier-invariant Hermite functions. Applying the Mehler
formula, we identify the reproducing kernels of these spaces. By suitable estimates of these kernels, we show that  $(\sqrt{n}/[\log(e+n)]^{\beta})$, with one auxiliary point added to it, is a universal interpolating sequence for at least one of the Hilbert spaces under consideration. However, this result fails when  \(\beta>1/2\).
\end{abstract}

\maketitle

\section{Introduction}

Radchenko and Viazovska proved in \cite{RV} that an even Schwartz function is
determined by its values and the values of its Fourier transform at $
 0,\sqrt1,\sqrt2,\ldots$, 
with the convention for the Fourier transform $\widehat{f}$ of $f$ that
\[
 \widehat f(\xi)
 \coloneqq \int_{\R}f(x)\e^{-2\pi i x\xi}\,\dd x.
\]
In fact, they constructed even Schwartz functions \(a_n\) such that
\[
 h(x)
 =
 \sum_{n=0}^{\infty}a_n(x)h(\sqrt n)
 +\sum_{n=0}^{\infty}\widehat a_n(x)\widehat h(\sqrt n),
\]
with absolute convergence for every even Schwartz function \(h\).  They asked whether this formula continues to hold whenever the right-hand side is well-defined
and absolutely convergent \cite[Question~1]{RV}. A partial positive answer was given in \cite{RV}, namely that the formula still holds with absolute convergence when $h(x)$ and $\widehat{h}(x)$ are both $O((1+|x|)^{-13})$. The latter decay condition was further weakened in \cite[Theorem 7.1]{BRS}.

We will give a negative answer to Radchenko and Viazovska's question. Our main result shows that not only may nontrivial Fourier-invariant  functions vanish on the square-root nodes; they may in fact vanish on considerably denser sequences, namely logarithmic compressions of the form
\[
 \lambda_n(\beta) \coloneqq \frac{\sqrt n}{[\log(\e+n)]^\beta},\quad n\geq0,
\label{eq:nodes}
\]
for $0<\beta \le \tfrac12$.
\begin{theorem}\label{thm:main}
Fix \(0\leq\beta\leq\tfrac12\). There exists a real-valued, continuous, and even function   $f_\beta$ that is both integrable and square-integrable 
such that
\[
 \widehat f_\beta=f_\beta,\qquad
 f_\beta(\lambda_n(\beta))=0\quad(n\geq0),\qquad
 f_\beta(\tfrac12)=1.
\label{eq:main-conclusion}
\]
\end{theorem}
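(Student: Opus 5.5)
We will construct $f_\beta$ inside a Hilbert space built from the Fourier-invariant Hermite functions, as the abstract suggests. Let $h_k$ be the $L^2$-normalized Hermite functions adapted to the convention $\widehat f(\xi)=\int f(x)\e^{-2\pi i x\xi}\dd x$, so that $\widehat{h_k}=(-i)^k h_k$. The even Fourier-invariant ones are the $h_{4m}$, $m\ge0$. For a weight sequence $w=(w_m)$ decaying suitably, for instance $w_m=r^{4m}$ with $0<r<1$, or more flexibly $w_m=\e^{-a\sqrt m}$ or a power, define
\[
 \cH_w=\Big\{f=\sum_{m\ge0}c_m h_{4m}:\ \sum_m |c_m|^2/w_m<\infty\Big\}.
\]
Every element is even and satisfies $\widehat f=f$. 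Point evaluation is bounded whenever $\sum_m w_m h_{4m}(x)^2<\infty$, so $\cH_w$ is a reproducing kernel Hilbert space with kernel $K(x,y)=\sum_m w_m h_{4m}(x)h_{4m}(y)$. For geometric weights, Mehler's formula (averaged over the four fourth roots of unity $r\mapsto i^jr$ to isolate the indices $k\equiv0\pmod 4$) gives $K$ in closed form as a combination of Gaussian-type expressions $\exp\bigl(-\pi\frac{(1+s^2)(x^2+y^2)-4sxy}{1-s^2}\bigr)/\sqrt{1-s^2}$ with $s=i^jr$. Superposing in $r$ then yields kernels for the more general weights.

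The function will be $f_\beta=g/g(\tfrac12)$, where $g$ is the orthogonal projection of $K(\cdot,\tfrac12)$ onto the orthogonal complement of $\overline{\operatorname{span}}\{K(\cdot,\lambda_n)\}_{n\ge0}$. This $g$ is automatically Fourier invariant, even, real, and vanishes on all $\lambda_n$. The only thing to prove is $g\ne0$, i.e. that $K(\cdot,\tfrac12)$ is not in the closed span of $\{K(\cdot,\lambda_n)\}$. Equivalently, the sequence $\{\tfrac12\}\cup\{\lambda_n\}$ should be \emph{interpolating}: there is some $f\in\cH_w$ with $f(\lambda_n)=0$ and $f(\tfrac12)=1$. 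A sufficient route, giving even universal interpolation, is to show that the normalized kernels $k_n=K(\cdot,\lambda_n)/\sqrt{K(\lambda_n,\lambda_n)}$ together with $k_{1/2}$ form a Riesz sequence. We will verify this through a Schur-test or diagonal-dominance bound: if
\[
 \sup_{n}\sum_{m\ne n}\frac{|K(\lambda_n,\lambda_m)|}{\sqrt{K(\lambda_n,\lambda_n)K(\lambda_m,\lambda_m)}}<1,
\]
including the auxiliary point, then the Gram matrix is a small perturbation of the identity, and the interpolation problem with data $(1,0,0,\dots)$ is solvable. Regularity is then routine: continuity, and integrability and square-integrability, follow from $f\in\cH_w$ once $w_m$ decays fast enough that $\sum |c_m|\,\|h_{4m}\|_{L^1}<\infty$, using $\|h_k\|_{L^1}\lesssim k^{1/4}$.

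The main obstacle is the kernel estimate. We need off-diagonal decay of $K(x,y)$ relative to the diagonal that is strong enough to absorb the node density, which near $x$ is about $x\log^{\beta}$-compressed: the gaps are $\lambda_{n+1}-\lambda_n\approx (\log n)^{-\beta}/(2\sqrt n)$. For geometric weights the normalized kernel behaves like $\exp(-c(r)(x-y)^2)$ with $c(r)\sim 1/(1-r)$, and a single Gaussian cannot beat gaps of size $1/\sqrt n$ uniformly. The plan is therefore to choose weights $w_m$ decaying subexponentially, producing a kernel whose effective Gaussian width shrinks with $|x|$. Heuristically, near $x$ the relevant Hermite index is $m\approx x^2$, so the local scale is governed by $w$ near $m\sim x^2$. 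We would then estimate the normalized $K(\lambda_n,\lambda_m)$ via the Mehler integral representation and a saddle-point or Laplace analysis, obtaining decay like $\exp(-c\,x^2(\log x)^{2\beta}(x-y)^2)$ or similar. Summing over the nodes gives a convergent geometric-type sum precisely when $\beta\le\tfrac12$; the paper notes the method fails for $\beta>1/2$. Tuning the weight and the constants so that this sum is $<1$, possibly after discarding finitely many nodes and handling them by finite-dimensional linear algebra, is where the real work lies.
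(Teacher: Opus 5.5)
Your framework is the paper's: expansions $\sum c_m\psi_{4m}$, the kernel obtained from Mehler's formula averaged over $i^\ell r$, and Bari's theorem reducing the problem to a Gram matrix close to $I$. But the decisive choice, the weight, points the wrong way, and with your concrete choices the function cannot exist in $\cH_w$. Geometric weights and $w_m=\e^{-a\sqrt m}$ give spaces of Schwartz functions. For $\beta=0$ the Radchenko--Viazovska formula then forces $f\equiv0$, and for $\beta>0$ the Kulikov--Nazarov--Sodin theorem does. Your integrability criterion fails for the same reason. Requiring $\sum_m|c_m|\,\norm{\psi_{4m}}_1<\infty$ for all $f\in\cH_w$ amounts to $\sum_m w_m\sqrt m<\infty$. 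For decreasing weights this gives $w_m\lesssim m^{-3/2}$, hence $\sum_m m|c_m|^2<\infty$, i.e. $f=\widehat f\in H^1(\R)$, and for $\beta>0$ the Kulikov--Nazarov--Sodin theorem again gives $f\equiv0$.

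The missing idea is to make the space as rough as possible. The paper takes $w_j\asymp(j+1)^{-1/2}[\log(\e+j)]^{-1-\eta}$, the borderline at which point evaluation is still bounded. Then $K_\eta(x,x)\asymp\eta^{-1}[\log(\e+x^2)]^{-\eta}$ nearly blows up, while the off-diagonal kernel mass stays bounded. Consequently the normalized non-oscillating row sums are $O(\eta L^{2\beta-1})$: roughly $L^{2\beta}$ nodes per unit of $q=x^2$, each contributing on average $O(\eta/L)$. This is exactly where $\beta\le\tfrac12$ enters, and why the Gram matrix tends to $I$ as $\eta\to0$. For this weight $\sum_m w_m\sqrt m$ diverges, so $L^1$ membership needs the dyadic-block argument of Lemma~\ref{prop:l1}. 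Your target decay $\exp(-cx^2(\log x)^{2\beta}(x-y)^2)$ is also on the wrong scale. At the node spacing $|x-y|\asymp x^{-1}(\log x)^{-2\beta}$ it equals $\exp(-c(\log x)^{-2\beta})\approx1$, so neighbours would not be nearly orthogonal.

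Second, even with the right weight, the absolute diagonal-dominance (Schur) criterion you propose fails. Fourier invariance forces the kernel to contain the component $K^{\mathrm{osc}}_\eta$ coming from $M_{\pm ir}$, of the form $\int p(t)\e^{-\pi u(t)(x^2+y^2)}\cos(2\pi v(t)xy)\,\dd\mu(t)$; your Gaussian heuristic ignores it, and also ignores $M_{-r}$. Already in the row of $\lambda_0=0$, where the cosine is identically $1$, the normalized entries are $\asymp\eta\,(1+q_m)^{-1/2}[\log(\e+q_m)]^{-1-\eta/2}$. Their sum over $m$ diverges, even though the row lies in $\ell^2$. So $\sup_n\sum_{m\ne n}|G_{nm}|<1$ is false in the very space where the theorem holds, and cancellation in $\cos(2\pi v\lambda_n\lambda_m)$ must be exploited. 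The paper does this with a large-sieve inequality (Lemma~\ref{lem:large-sieve}), built from the Montgomery--Vaughan form of Hilbert's inequality and a sampling lemma, and applied on dyadic blocks after Minkowski's inequality in $t$. Both ingredients, the borderline weight and the cancellation argument for $K^{\mathrm{osc}}_\eta$, are absent from your plan.
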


We conclude that there is a limit as to how much we may relax smoothness and decay of the function $h$ in the Radchenko--Viazovska theorem. In a similar vein, we see that the sequence $\Lambda(\beta)\coloneqq (\pm \lambda_n(\beta))$ for $\beta>0$ ``violates'' the incarnation of the uncertainty principle established by Kulikov, Nazarov, and Sodin in \cite{KNS}. More precisely, since
\[
 \lambda_n(\beta)(\lambda_{n+1}(\beta)-\lambda_n(\beta))
 \sim\frac1{2(\log n)^{2\beta}}\longrightarrow0 <\tfrac12, \qquad \beta>0,
\label{eq:supercritical}
\]
$(\Lambda(\beta), \Lambda(\beta))$  is a supercritical pair (for $p=q=2$) in the sense of \cite{KNS} whenever $\beta>0$. Accordingly, by the Kulikov--Nazarov--Sodin uniqueness theorem \cite[Theorem~1.3-UP]{KNS}, a function $f$
with both $f$ and $\widehat{f}$ in the Sobolev space \(H^1(\R)\)  is determined by the values of $f$ and $\widehat{f}$ at the points of $\Lambda(\beta)$. Hence our result shows that  we cannot replace $H^1(\R)$ by $L^1(\R)\cap L^2(\R)$ in that uniqueness theorem.

On the other hand, a classical result of Amrein and Berthier \cite[Proposition 6]{AB} asserts that, given two arbitrary subsets $A$ and $B$ of $\R$, both  of finite Lebesgue measure, there exists a nontrivial $f$ in $L^2(\R)$ such that $f=0$ a.e. on $A$ and $\widehat{f}=0$ a.e. on $B$. Hence the situation becomes drastically different when functions cease to make sense pointwise. One may perhaps think of our result as dealing with an intermediate case in which functions are non-smooth and ``barely'' make sense pointwise. It would be interesting to explore further the finer details of the ``transition'' from the Kulikov--Nazarov--Sodin theorem to that of Amrein--Berthier.

We will obtain Theorem~\ref{thm:main} as a consequence of a stronger result, stated in the next section. This is an interpolation result for  a scale of reproducing kernel Hilbert spaces $\cH_\eta$, with \(\eta>0\) small, consisting of expansions
\[
 g=\sum_{j\geq0}c_j\psi_{4j}, 
\]
where $\psi_n$ denote the Hermite functions. The square of the norm of $g$ in $\cH_\eta$ is comparable to
\begin{equation}
 \sum_{j\geq0}\sqrt{j+1}\,
 [\log(\e+j)]^{1+\eta}|c_j|^2.
\label{eq:intro-norm}
\end{equation}
Every element of \(\cH_\eta\) is even and fixed by the Fourier
transform.  The logarithmic factor in \eqref{eq:intro-norm} makes point evaluation continuous, and it is just large
enough to imply the embedding
\[
 \cH_\eta\subset L^1(\R).
\]
It may be shown that both these properties fail at $\eta=0$. However, it turns out that when $\eta$ is small enough, we are in the most favorable situation and may prove that the sequence $(\lambda_n(\beta))$, along with the additional auxiliary point $\tfrac12$, constitutes a universal interpolating sequence for $\cH_\eta$. Theorem~\ref{thm:main} is an immediate consequence of that result. 

Thanks to Mehler's formula for the Hermite polynomials, we are in Section~\ref{sec:RK} able to get a precise expression for the reproducing kernel $K_\eta(x,y)$ of $\cH_\eta$. The idea is then to appeal to a classical theorem of Bari which asserts that a sequence $(x_n)$, with $x_n\ge 0$, is a universal interpolating sequence for $\cH_\eta$ if and only if the Gram matrix with entries
\[ \frac{K_\eta(x_n,x_m)}{\sqrt{K_\eta(x_n,x_n) K_\eta(x_m,x_m)}} \]
is bounded below and above as an operator on $\ell^2$. The remarkable fact, which is the crux of our proof, is that this Gramian effectively tends to the identity matrix when $\eta\to 0$, in such a way that it has the desired property when $\eta$ is small enough. Our proof of this result is based on a splitting of the kernel into a positive part and an oscillating part; the relevant estimates for each of these parts are carried out in respectively Section~\ref{sec:non} and Section~\ref{sec:osc}. 

The reader may recognize that we could just as well have stated a more general result about sequences with the same density and separation property as $(\lambda_n(\beta))$. To avoid obscuring technicalities, we have chosen not to do that. While it is also possible to replace $(\log x)^\beta$ by somewhat more general weights, we will verify a more curious fact: Our method of proof breaks down when $\beta>\tfrac12$, because the sequence $(\lambda_n(\beta))$ becomes too dense for the Gram matrix to act boundedly on $\ell^2$. We will supply the proof of this fact in the final Section~\ref{sec:conclude}. 

Throughout the paper, $c$ and $C$ will denote positive absolute constants whose values may change from line to line. A subscript, as in $C_\beta$, indicates dependence on some parameter involved in the computations. We write $A\asymp B$ when $cA \le B \le CA$. 

\section{An interpolation theorem for a scale of reproducing kernel Hilbert spaces}

Let \(H_m\) be the physicists' Hermite polynomial, and put
\begin{equation}
 \psi_m(x)=
 \frac{2^{1/4}}{\sqrt{2^m m!}}\,
 H_m(\sqrt{2\pi}\,x)\e^{-\pi x^2}.
 \label{eq:hermite}
\end{equation}
Then \((\psi_m)_{m\geq0}\) is an orthonormal basis of \(L^2(\R)\), and
\[
 \widehat{\psi_m}=(-i)^m\psi_m.
\]
See \cite[Chapter~1]{Thangavelu} for these and other basic facts about the Hermite functions.

Fix
\[
 0<\eta\leq\eta_0,
\]
where \(\eta_0>0\) will be a sufficiently small absolute constant.
For \(j\geq0\), define
\[
 a_j\coloneqq 
 \int_0^{\e^{-2}}
 \frac{\e^{-(j+1)t}}
 {\sqrt t\,[\log(1/t)]^{1+\eta}}\,\dd t.
\]
The following asymptotics will play an essential role.
\begin{lemma}\label{lem:aj}
Uniformly for \(j\geq0\) and \(0<\eta\leq\eta_0<1\),
\[
 a_j\asymp
 \frac1{\sqrt{j+1}\,[\log(\e+j)]^{1+\eta}}.
\]
\end{lemma}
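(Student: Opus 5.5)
We estimate the integral by splitting it at the scale \(t\approx 1/(j+1)\), where the exponential factor switches off. Write \(N=j+1\ge1\) and \(L(t)=\log(1/t)\ge2\) on \((0,\e^{-2}]\). Two regimes arise. For small \(N\) (say \(N\le \e^2\)) the claim is that \(a_j\asymp1\) uniformly in \(\eta\). For large \(N\) we prove the upper and lower bounds separately. Throughout, \(\eta\in(0,\eta_0]\) with \(\eta_0<1\), so \(1+\eta\in(1,2)\). Every power \([\log]^{1+\eta}\) then lies between the first and second powers, and constants do not depend on \(\eta\).

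\emph{Lower bound.} Assume \(N\ge \e^2\) and restrict the integral to \(t\in[1/(2N),1/N]\subset(0,\e^{-2}]\). There \(\e^{-Nt}\ge \e^{-1}\) and \(1/\sqrt t\ge\sqrt N\). Also \(L(t)\le\log(2N)\le C\log(\e+j)\), so \(L(t)^{1+\eta}\le C[\log(\e+j)]^{1+\eta}\) with \(C\) independent of \(\eta\), because \(C^{1+\eta}\le C^2\). Since the interval has length \(1/(2N)\), we get \(a_j\ge c\,N^{-1/2}[\log(\e+j)]^{-1-\eta}\). For \(N<\e^2\) we instead integrate over \([\e^{-3},\e^{-2}]\), where every factor is bounded above and below by absolute constants. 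This gives \(a_j\ge c\), which is comparable to the right-hand side since \(j\) is bounded.

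\emph{Upper bound.} Split \((0,\e^{-2}]\) into \(I_1=(0,\min(1/N,\e^{-2})]\) and \(I_2=[1/N,\e^{-2}]\), the latter being empty when \(N\le \e^2\).

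On \(I_2\) we drop nothing but note that \(L(t)\ge2\) throughout. We use \(\e^{-Nt}\le \e^{-Nt/2}\cdot \e^{-Nt/2}\). The substitution \(s=Nt\) gives
\[\int_{1/N}^{\e^{-2}}\frac{\e^{-Nt}}{\sqrt t\,L(t)^{1+\eta}}\dd t=N^{-1/2}\int_1^{N\e^{-2}}\frac{\e^{-s}}{\sqrt s\,[\log(N/s)]^{1+\eta}}\dd s.\]
For \(s\le\sqrt N\) we have \(\log(N/s)\ge\tfrac12\log N\), so this part contributes at most \(C N^{-1/2}(\log N)^{-1-\eta}\), using \(2^{1+\eta}\le4\). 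For \(s\ge\sqrt N\) the log factor is bounded below by \(2\), and \(\e^{-s}\le \e^{-\sqrt N}\) is far smaller than \((\log N)^{-2}\). This part is therefore also \(O(N^{-1/2}(\log N)^{-1-\eta})\).

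On \(I_1\) we bound \(\e^{-Nt}\le1\). For \(N\le\e^2\) this gives \(a_j\le\int_0^{\e^{-2}}t^{-1/2}\dd t/2\le C\). For \(N>\e^2\) we need \(\int_0^{1/N}t^{-1/2}L(t)^{-1-\eta}\dd t\). The function \(L\) is decreasing in \(t\), so \(L(t)\ge\log N\) on \((0,1/N]\). Hence this integral is at most \((\log N)^{-1-\eta}\cdot 2N^{-1/2}\).

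Combining the pieces, \(a_j\le CN^{-1/2}(\log N)^{-1-\eta}\) for large \(N\). Since \(\log N\asymp\log(\e+j)\) for \(N\ge \e^2\), with absolute constants that survive raising to the power \(1+\eta\in(1,2)\), this is the claimed upper bound.

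The only delicate point is the piece of \(I_2\) where \(s\) is close to \(N\e^{-2}\). There the logarithm \(\log(N/s)\) is not large, and one must check that the exponential decay dominates. This is exactly what the cut at \(s=\sqrt N\) handles, so I expect no genuine obstacle. The main care needed is to keep every constant uniform in \(\eta\), which we secure by the bound \(1+\eta\le2\).
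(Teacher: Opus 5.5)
Your proof is correct and follows essentially the same route as the paper. After the substitution $s=(j+1)t$, the logarithm is comparable to $\log(\e+j)$ on the bulk range of $s$; you cut at $s\le\sqrt N$, while the paper uses $s\in[\e^{-2}/\sqrt J,\e^{-2}\sqrt J]$. The remaining ranges are negligible either because of the factor $\e^{-s}$ or because the interval is short. Your version is also more careful than the paper's sketch: you treat small $j$ separately (where the paper's middle interval degenerates) and you keep constants uniform in $\eta$ via $1+\eta\le 2$.
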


\begin{proof}
We write \(J=j+1\) and put \(s=Jt\).  Then
\[
 a_j=J^{-1/2}
 \int_0^{J\e^{-2}}
 \frac{\e^{-s}s^{-1/2}}
 {[\log(J/s)]^{1+\eta}}\,\dd s.
\]
We see that
\[  \int_{\e^{-2}/\sqrt{J}}^{\e^{-2}\sqrt{J}}
 \frac{\e^{-s}s^{-1/2}}
 {[\log(J/s)]^{1+\eta}}\,\dd s \asymp \frac1{\sqrt{j+1}\,[\log(\e+j)]^{1+\eta}}.
\]
The integral over the remaining intervals $[0,\e^{-2}/\sqrt{J}]$ and $[\e^{-2}\sqrt{J}, e^{-2}J]$ is trivially an order of magnitude smaller, and so the result follows.
\end{proof}

We define the real Hilbert space
\[
 \cH_\eta
 \coloneqq 
 \left\{
 g=\sum_{j=0}^{\infty}c_j\psi_{4j}:
 c_j\in\R,\quad
 \norm{g}_\eta^2
 :=\sum_{j=0}^{\infty}\frac{|c_j|^2}{a_j}<\infty
 \right\},
\]
Since
\(\sup_j a_j<\infty\), 
\[
 \norm{g}_2\leq C\norm{g}_\eta.
\]
Every member of \(\cH_\eta\) is even and is fixed by the Fourier
transform.

\begin{lemma}\label{lem:evaluation}
Point evaluation is continuous on \(\cH_\eta\).  Every
\(g\) in \(\cH_\eta\) has a continuous representative, and the reproducing
kernel of $\cH_\eta$ is
\begin{equation}
 K_\eta(x,y)
 =\sum_{j=0}^{\infty}
 a_j\psi_{4j}(x)\psi_{4j}(y).
\label{eq:kernel-series}
\end{equation}
The series converges locally uniformly on \(\R^2\).
\end{lemma}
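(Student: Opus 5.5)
The plan is to show that $\sum_j a_j\psi_{4j}(x)^2$ converges locally uniformly in $x$. By Cauchy--Schwarz, everything else then follows. For $g=\sum_j c_j\psi_{4j}\in\cH_\eta$ and any $x$,
\[
 \sum_{j}|c_j||\psi_{4j}(x)|
 \le \Bigl(\sum_j \frac{|c_j|^2}{a_j}\Bigr)^{1/2}\Bigl(\sum_j a_j\psi_{4j}(x)^2\Bigr)^{1/2}.
\]
So if $S(x):=\sum_j a_j\psi_{4j}(x)^2$ is bounded on compact sets, the series for $g$ converges absolutely, and locally uniformly by applying the same inequality to tails. Each $\psi_{4j}$ is continuous, so $g$ has a continuous representative. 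This pointwise sum agrees a.e.\ with the $L^2$ limit, since a subsequence of partial sums converges a.e. We then get $|g(x)|\le S(x)^{1/2}\norm{g}_\eta$, which is continuity of point evaluation.

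For the kernel, the function $k_y:=\sum_j a_j\psi_{4j}(y)\psi_{4j}$ has coefficients $a_j\psi_{4j}(y)$. Its norm satisfies $\norm{k_y}_\eta^2=\sum_j a_j\psi_{4j}(y)^2=S(y)<\infty$, so $k_y\in\cH_\eta$. Moreover $\ip{g}{k_y}_\eta=\sum_j c_j\psi_{4j}(y)=g(y)$, so $k_y$ is the reproducing kernel and $K_\eta(x,y)=k_y(x)$ is given by \eqref{eq:kernel-series}. Local uniform convergence of this series on $\R^2$ follows from $|a_j\psi_{4j}(x)\psi_{4j}(y)|\le \tfrac12 a_j(\psi_{4j}(x)^2+\psi_{4j}(y)^2)$ together with the uniform convergence of the tails of $S$ on compacts.

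The main step is the local uniform convergence of $S$. I would use the classical Hermite function bound $|\psi_m(x)|\le C(m+1)^{-1/4}$, uniformly for $x$ in a fixed compact set, or even globally. Combined with Lemma~\ref{lem:aj}, this gives
\[
 a_j\psi_{4j}(x)^2\le \frac{C}{(j+1)[\log(\e+j)]^{1+\eta}},
\]
which is summable because $\eta>0$. The Weierstrass M-test then yields uniform convergence on all of $\R$.

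The only delicate input is the $m^{-1/4}$ bound: the bound is sharp, and this is exactly why $\eta>0$ is needed. I would cite it from \cite{Thangavelu} or Szeg\H{o}. If a direct route is preferred, the bound follows from the Plancherel--Rotach/WKB asymptotics in the oscillatory region $|x|\le c\sqrt m$, which contains any fixed compact set for $m$ large.
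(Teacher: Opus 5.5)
Your argument is correct and is essentially the paper's proof. The paper also combines the compact-set bound $\sup_{x\in E}|\psi_{4j}(x)|\le C_E(j+1)^{-1/4}$ with Lemma~\ref{lem:aj} to get local uniform convergence of $\sum_j a_j\psi_{4j}(x)^2$, and lets Cauchy--Schwarz do the rest; you simply spell out the a.e.\ identification and the reproducing identity in more detail.

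One correction: the parenthetical ``or even globally'' is false. In fact $\sup_{x\in\R}|\psi_m(x)|\asymp m^{-1/12}$, with the maximum near the turning point $x\approx\sqrt{(m+1/2)/\pi}$, so your M-test argument does not yield uniform convergence on all of $\R$. The lemma only claims local uniform convergence, however, and your compact-set version delivers exactly that.
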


\begin{proof}
The estimate
\begin{equation}
 \sup_{x\in E}|\psi_{4j}(x)|
 \leq C_E(j+1)^{-1/4}
\label{eq:fixed-hermite}
\end{equation}
holds for every compact subset \(E\) of \( \R\); see
\cite[\S18.15(v)]{DLMF}.  Hence, by Lemma~\ref{lem:aj},
\begin{equation}
 \sum_{j=0}^{\infty}
 a_j\sup_{x\in E}|\psi_{4j}(x)|^2
 \leq
 C_E\sum_{j=0}^{\infty}
 \frac1{(j+1)[\log(\e+j)]^{1+\eta}}
 <\infty.
\label{eq:evaluation-sum}
\end{equation}
By the Cauchy--Schwarz inequality, we get 
\[
 |g(x)|
 \leq
 \norm{g}_\eta
 \left(
 \sum_{j\geq0}a_j|\psi_{4j}(x)|^2
 \right)^{1/2},
\]
which in view of \eqref{eq:evaluation-sum} ensures uniform convergence of the Hermite expansion of $g$ and hence continuity of $g$.  The reproducing identity and the local uniform convergence of
\eqref{eq:kernel-series} also follow at once.
\end{proof}
Initially, we could only assert that $g=\widehat{g}$ almost everywhere for $g$ in $\cH_\eta$. In view of the preceding lemma, however, we may from now on consider $\cH_\eta$ as consisting of continuous functions, with  
$g(x)=\widehat{g}(x)$ for every real $x$.

We are now ready to state a result of which Theorem~\ref{thm:main} turns out to be a corollary. To this end, we let $\Lambda^\ast(\beta)$ be the sequence consisting of the points $\tfrac12$ and $\lambda_0(\beta), \lambda_1(\beta), \ldots $, and define the operator $R_{\beta,\eta}$ which maps a function $g$ to the sequence
\[ g(x)/\sqrt{K_\eta(x,x)}, \qquad x\in \Lambda^\ast(\beta). \] 
We will let $\ell^2$ denote the Hilbert space of square-summable real-valued sequences indexed by the points   $\tfrac12$ and $\lambda_0(\beta), \lambda_1(\beta), \ldots $.
\begin{theorem}\label{thm:realmain}
Fix $0\le \beta \le \tfrac12$. For all sufficiently small $\eta$, $R_{\beta,\eta}$ maps $\cH_\eta$ into and onto $\ell^2$. 
\end{theorem}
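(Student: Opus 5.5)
The plan is to use the reproducing kernel framework together with Bari's theorem. Put $k_x:=K_\eta(\cdot,x)/\sqrt{K_\eta(x,x)}$ for $x\in\Lambda^\ast(\beta)$. Then $R_{\beta,\eta}g=(\ip{g}{k_x}_\eta)_x$. The statement that $R_{\beta,\eta}$ maps $\cH_\eta$ boundedly into and onto $\ell^2$ is equivalent to $(k_x)$ being a Riesz sequence in $\cH_\eta$. This in turn holds if and only if the Gram matrix
\[
G_{xy}=\frac{K_\eta(x,y)}{\sqrt{K_\eta(x,x)K_\eta(y,y)}},\qquad x,y\in\Lambda^\ast(\beta),
\]
satisfies $c\,I\le G\le C\,I$ on $\ell^2$. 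Indeed, the adjoint of $R_{\beta,\eta}$ is $(b_x)\mapsto\sum b_xk_x$, and a bounded-below synthesis operator means $R_{\beta,\eta}$ is surjective. I will prove the stronger statement
\[
\norm{G-I}_{\ell^2\to\ell^2}\to0\qquad(\eta\to0).
\]
To estimate it I use Schur's test with constant weights, that is, bounds on $\sup_x\sum_{y\ne x}|G_{xy}|$.

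The first step is a closed form for $K_\eta$. Write $a_j=\int_0^{\e^{-2}}\e^{-(j+1)t}w_\eta(t)\,\dd t$, where $w_\eta(t)=t^{-1/2}[\log(1/t)]^{-1-\eta}$. Then
\[
K_\eta(x,y)=\int_0^{\e^{-2}}\e^{-t}\sum_j \e^{-jt}\psi_{4j}(x)\psi_{4j}(y)\,w_\eta(t)\,\dd t .
\]
The inner sum picks out the indices divisible by $4$. I extract it by averaging the Mehler kernel $M_r(x,y)=\sum_m r^m\psi_m(x)\psi_m(y)$ over $r=i^k\e^{-t/4}$, $k=0,1,2,3$. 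Mehler's formula gives $M_r$ explicitly. For $r=\e^{-t/4}$ and $r=-\e^{-t/4}$ it is a positive Gaussian concentrated near $x=\pm y$ at scale $\sqrt t$, so the $t$-integral yields a positive, roughly diagonal part. For $r=\pm i\e^{-t/4}$ one gets complex Gaussians with phases $\e^{\pm 2\pi i xy/\sinh(\cdot)}$, essentially the kernel of the Fourier transform. This is the oscillating part. So $K_\eta=P_\eta+O_\eta$.

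The next step is the diagonal. Near $x=y$ the positive part gives
\[
K_\eta(x,x)\asymp\int_0^{\e^{-2}}\frac{w_\eta(t)}{\sqrt t}\,\e^{-c x^2 t}\,\dd t\asymp\eta^{-1}[\log(\e+x)]^{-\eta}
\]
for large $x$; the $\eta^{-1}$ comes from $\int_0 \frac{\dd t}{t\log(1/t)^{1+\eta}}$. For the off-diagonal positive part, $x-y\approx\delta$ gives $P_\eta(x,y)\lesssim[\log(1/\delta)]^{-\eta}/\eta$, up to a Gaussian cutoff at $|x-y|\gtrsim1$. The normalized entries are then roughly $(\log(1/|x-y|)/\log x)^{-\eta}$ restricted to $|x-y|\lesssim$ some scale.

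This is the main obstacle. The row sums over the $\sim x/\log^{2\beta}x$ points in a unit neighbourhood of $x$ (spacing $\delta_x\approx(\log x)^{2\beta}/x$... actually spacing $\asymp 1/(x\log^{2\beta}x)$) must be small. A naive bound fails, because the entries are of size $1-O(\eta)$ when $|x-y|$ is a power of $1/x$. I therefore need a sharper form of the Mehler part that keeps the factor $\e^{-\pi(x-y)^2/t}$, which confines the $t$-integral to $t\gtrsim(x-y)^2$. This has to be combined with the large-$x$ decay $\e^{-c x^2 t}$ coming from $(x+y)^2\tanh$-type terms, which confines it to $t\lesssim x^{-2}$. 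The entry is then about $\eta\int_{(x-y)^2}^{x^{-2}}\frac{\dd t}{t\log^{1+\eta}(1/t)}$ relative to the diagonal, which vanishes unless $|x-y|\lesssim1/x$. Summing over the $\lesssim \log^{2\beta}x$ neighbours within $1/x$ gives a row sum of about $\eta\,\log^{2\beta}x\cdot\frac{\log\log x}{\log x}$-type terms. This is small uniformly precisely when $2\beta\le1$, and I expect the threshold $\beta\le\tfrac12$ to appear exactly here. This is the computation I would attempt first.

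The oscillating part is handled next. After normalization, $O_\eta(x,y)$ behaves like $\eta\int w_\eta(t)\,t^{-1/2}\cos(2\pi xy/\sinh(t/2)\cdots)\,\dd t$. Integrating by parts in $t$ gains factors $(xy)^{-1}$, so $\sum_y|G_{xy}^{\mathrm{osc}}|\lesssim\eta\sum_y(1+xy)^{-1-\epsilon}$-type bounds. This is $O(\eta\cdot\text{polylog})$, and I would need to check it is small after dividing by $K_\eta(x,x)\sim\eta^{-1}$. The auxiliary point $\tfrac12$ and the finitely many small nodes will be treated separately: they contribute a fixed finite block that is positive definite, since the $k_x$ are linearly independent, together with off-block entries that tend to $0$. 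Schur's test then yields $\norm{G-I}<1$, which gives the claim.
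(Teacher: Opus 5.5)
Your reduction via Bari's theorem and the Gram matrix, the Mehler averaging over $i^\ell\e^{-t/4}$, the diagonal size $\eta^{-1}[\log(\e+x^2)]^{-\eta}$, and your Schur-test treatment of the non-oscillating part all follow the paper. (One correction to your heuristic: sum the neighbours term by term. The $k$th neighbour of $x$ contributes about $\eta\log(K/k)/\log x$, with $K\asymp(\log x)^{2\beta}$. Since $\sum_{k\le K}\log(K/k)=O(K)$, the row sum is $O(\eta(\log x)^{2\beta-1})$. Your extra factor $\log\log x$ would spoil the case $\beta=\tfrac12$.)

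The genuine gap is the oscillating part. By \eqref{eq:fourier-exact}--\eqref{eq:puv}, the branches $z=\pm i\e^{-t/4}$ contribute $p(t)\e^{-\pi\tanh(t/4)(x^2+y^2)}\cos(2\pi xy\sech(t/4))$ with $p(t)\asymp1$. So there is no extra factor $t^{-1/2}$ in the measure. More importantly, the phase is $2\pi xy\sech(t/4)$, whose $t$-derivative is $O(xy\,t)$ and vanishes at $t=0$. The Gaussian factor confines the integral to $t\lesssim(x^2+y^2)^{-1}$, and on that range the phase moves by only $O((x^2+y^2)^{-1})$. Integrating by parts in $t$ therefore gains nothing.

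In fact $K^{\mathrm{osc}}_\eta(x,y)$ is essentially $\cos(2\pi xy)$ times a factor of exact order $(x^2+y^2)^{-1/2}[\log(x^2+y^2)]^{-1-\eta}$. Up to harmless factors, $|G^{\mathrm{osc}}_{nm}|$ is then $\eta|\cos(2\pi\lambda_n\lambda_m)|/(\mu\log\mu)$ with $\mu=\max(\lambda_n,\lambda_m)$. Three consequences follow:
\begin{itemize}
\item The absolute row sums diverge, since $\sum_{m\ge2}\lambda_m^{-1}(\log m)^{-1}=\infty$.
\item Your own proposed bound $\eta\sum_y(1+xy)^{-1-\epsilon}$ also diverges over $y=\lambda_m\approx\sqrt m$ unless $\epsilon>1$.
\item Worse, the matrix $(|G^{\mathrm{osc}}_{nm}|)$ is unbounded on $\ell^2$. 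Test it on the indicator of $N\le m<2N$. Since $|\cos(2\pi\lambda_n\lambda_m)|$ has size $1$ on average in $m$, each of these rows receives $\asymp\eta\sqrt N(\log N)^{\beta-1}$.
\end{itemize}
So no Schur test, with any weights, can control this part, and no positivity argument can give $\norm{G-I}\to0$.

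The missing idea is to exploit cancellation in $\sum_m c_m\cos(2\pi v\lambda_n\lambda_m)$. The paper does this in three steps. First, Minkowski's inequality in $t$ freezes $v=v(t)\in[v_0,1]$ and lets the Gaussian factors be discarded. Second, it reduces to the lower dyadic part $F^-$ via $\norm{F}\le3\norm{F^-}$. Third, on each block $N\le n<2N$ it applies the large sieve inequality of Lemma~\ref{lem:large-sieve}, which is built from the sampling Lemma~\ref{lem:sampling} and the Montgomery--Vaughan inequality \eqref{eq:individual-gap}.

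On a diagonal block this bounds the cosine matrix by about $\sqrt N(\log N)^{\beta}$ instead of the trivial $N$. Combined with the factor $N^{-1/2}(\log N)^{\beta-1-\eta}$ from the $t$-integral, each block contributes order $\eta(\log N)^{2\beta-1}$. Summing over the dyadic blocks yields $C_\beta\eta$, or $C\sqrt\eta$ when $\beta=\tfrac12$. Without an ingredient of this kind your argument does not bound $G^{\mathrm{osc}}$, and so it does not prove the theorem.
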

By standard terminology, this means that the sequence $\Lambda^\ast(\beta)$ is a universal interpolating sequence for $\cH_\eta$ once $\eta$ is small enough. Since we may in particular find a function $f_\beta$ in $\cH_\eta$ such that $f_\beta(\tfrac12)=1$ and $f_\beta(\lambda_n)=0$, $n=0,1,\ldots $, we obtain Theorem~\ref{thm:main} from Theorem~\ref{thm:realmain} once the following lemma has been established.

\begin{lemma}\label{prop:l1}
For \(0<\eta\leq\eta_0\), we have
\begin{equation}
 \norm{g}_1\leq C_\eta\norm{g}_\eta
 \qquad g\in\cH_\eta.
\label{eq:l1-embedding}
\end{equation}
\end{lemma}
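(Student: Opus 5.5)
The plan is to decompose $g$ into dyadic blocks of Hermite coefficients and to bound the $L^1$ norm of each block by a weighted $L^2$ estimate adapted to its scale. Bounding term by term with $\norm{\psi_{4j}}_1\asymp (j+1)^{1/4}$ and then using Cauchy--Schwarz against the weights $1/a_j$ fails: it would require $\sum_j a_j\sqrt{j+1}<\infty$, and by Lemma~\ref{lem:aj} this is $\sum_j [\log(\e+j)]^{-1-\eta}=\infty$. The loss comes from ignoring orthogonality inside blocks of comparable degree. So I will group the indices dyadically and only use the triangle inequality across blocks.

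\textbf{Step 1 (dyadic blocks).} For $k\geq0$, let $I_k=\{j: 2^k-1\le j<2^{k+1}-1\}$ and put
\[
 g_k=\sum_{j\in I_k}c_j\psi_{4j},\qquad A_k^2=\sum_{j\in I_k}|c_j|^2=\norm{g_k}_2^2 .
\]
By Lemma~\ref{lem:aj}, $1/a_j\asymp 2^{k/2}(k+1)^{1+\eta}$ for $j\in I_k$, uniformly in $k$ and $\eta$. Hence
\[
 \norm{g}_\eta^2\asymp\sum_k 2^{k/2}(k+1)^{1+\eta}A_k^2 .
\]

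\textbf{Step 2 (the key block estimate).} I will show that
\[
 \norm{g_k}_1\le C\,2^{k/4}A_k
\]
with $C$ absolute. Set $R=2^{k/2}$ and apply Cauchy--Schwarz:
\[
 \int|g_k|\le\Bigl(\int\frac{\dd x}{R^2+x^2}\Bigr)^{1/2}\norm{(R^2+x^2)^{1/2}g_k}_2\le C R^{-1/2}\bigl(R\norm{g_k}_2+\norm{xg_k}_2\bigr).
\]
To control $\norm{xg_k}_2$, I will use the three-term recurrence $x\psi_m=(2\sqrt\pi)^{-1}\bigl(\sqrt{m+1}\,\psi_{m+1}+\sqrt m\,\psi_{m-1}\bigr)$. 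The vectors $x\psi_{4j}$ for distinct $j$ involve disjoint sets of Hermite indices, hence are mutually orthogonal, and each has norm $\le C\sqrt{j+1}$. It follows that
\[
 \norm{xg_k}_2\le C\,2^{k/2}A_k=CRA_k,
\]
which gives the claimed bound. This step is where the structure of the problem enters: multiplication by $x$ has norm $\asymp\sqrt m$ on degree-$m$ Hermite functions, so functions in block $k$ effectively live on $|x|\lesssim 2^{k/2}$. The only care needed is the orthogonality bookkeeping, which the spacing $4$ between the indices makes trivial.

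\textbf{Step 3 (summing the blocks).} By the triangle inequality (the partial sums converge in $L^2$, and $L^1$ convergence follows from the bound itself via Fatou) and Cauchy--Schwarz,
\[
 \norm{g}_1\le\sum_k\norm{g_k}_1\le C\sum_k 2^{k/4}A_k\le C\Bigl(\sum_k (k+1)^{-1-\eta}\Bigr)^{1/2}\Bigl(\sum_k 2^{k/2}(k+1)^{1+\eta}A_k^2\Bigr)^{1/2}\le C_\eta\norm{g}_\eta .
\]
Here $C_\eta\asymp\eta^{-1/2}$, which explains why the logarithmic power $1+\eta$ with $\eta>0$ is exactly what is needed. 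I expect no serious obstacle beyond Step~2, and that step is elementary once the block decomposition is in place.
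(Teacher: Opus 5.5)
Your proof is correct and follows the paper's argument essentially step for step. You use the same dyadic blocks, the same weighted Cauchy--Schwarz bound $\norm{g_k}_1\le C2^{k/4}\norm{g_k}_2$, and the same final Cauchy--Schwarz summation against $\sum_k (k+1)^{-1-\eta}$. The only cosmetic difference is how you bound $\norm{xg_k}_2\le C2^{k/2}\norm{g_k}_2$: you use the three-term recurrence and orthogonality of the vectors $x\psi_{4j}$, whereas the paper uses the quadratic form of the harmonic oscillator $\mathcal L$.
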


\begin{proof}
We split $g$ dyadically and write
\[ g_k \coloneqq \sum_{j=2^k}^{2^{k+1}-1} c_j \psi_{4j}, \qquad k\ge 0.\]
The harmonic oscillator
\[
 \mathcal L
 =-\frac1{4\pi}\frac{\dd^2}{\dd x^2}+\pi x^2
\]
satisfies
\[
 \mathcal L\psi_m=(m+1/2)\psi_m.
\]
Consequently,
\[
 \pi\norm{xg_k}_2^2
 \leq\ip{\mathcal Lg_k}{g_k}_{L^2}
 =
 \sum_{2^k\leq j<2^{k+1}}
 (4j+1/2)|c_j|^2
 \leq C2^k\norm{g_k}_2^2,
\]
and hence
\begin{equation}
 \norm{xg_k}_2\leq C2^{k/2}\norm{g_k}_2.
\label{eq:x-block}
\end{equation}
By the Cauchy--Schwarz inequality, we therefore get
 \begin{equation}
 \norm{g_k}_1
 \leq
 \left(
 \int_{\R}\frac{\dd x}{1+2^{-k}x^2}
 \right)^{1/2}
 \left(
 \norm{g_k}_2^2+2^{-k}\norm{xg_k}_2^2
 \right)^{1/2} 
 \leq C2^{k/4}\norm{g_k}_2.
\label{eq:block-l1}
\end{equation}
Lemma~\ref{lem:aj} gives the dyadic norm equivalence
\[
 \norm{g}_\eta^2
 \asymp
 |c_0|^2+
 \sum_{k\geq0}
 2^{k/2}(k+1)^{1+\eta}\norm{g_k}_2^2.
\]
Therefore, by \eqref{eq:block-l1} and another application of the Cauchy--Schwarz inequality,
\[
 \sum_{k\geq0}\norm{g_k}_1 \le 
 C\norm{g}_\eta 
 \left(\sum_{k\geq0}(k+1)^{-1-\eta}\right)^{1/2},
\]
which yields \eqref{eq:l1-embedding} since $\eta>0$.
\end{proof}
By a classical theorem of Bari \cite[p. 132]{N}, a sequence of positive numbers $X=(x_n)$ is a universal interpolating sequence for $\cH_\eta$ if and only if the sequence of 
normalized reproducing kernels
\[ \frac{K_\eta(x_n,\cdot )}{\sqrt{K_\eta(x_n,x_n)}} \]
is a Riesz sequence in $\cH_\eta$. This means that to prove Theorem~\ref{thm:realmain}, we may show that the Gram matrix $G_{\beta,\eta}$ with entries
\[ \frac{K_\eta(x,y)}{\sqrt{K_\eta(x,x)K_\eta(y,y)}}, \qquad x,y\in \Lambda^\ast(\beta), \]
is bounded below and above on $\ell^2$. It follows that Theorem~\ref{thm:realmain} is a consequence of the following theorem.
\begin{theorem}\label{thm:mainmain}
Fix $0\le \beta\le \tfrac12$. For $0<\eta\le \eta_0$, we have
\begin{equation}
 \norm{G_{\beta,\eta}-I}_{\ell^2\to\ell^2}
 \leq
 \begin{cases}
  C_\beta\eta,&0\leq\beta<1/2,\\
  C\sqrt\eta,&\beta=1/2.
 \end{cases}
\label{eq:intro-riesz}
\end{equation}
\end{theorem}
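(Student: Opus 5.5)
The plan is to obtain a closed form for $K_\eta$ from Mehler's formula, normalize it, split it into a positive part and an oscillating part, and control both off-diagonal pieces of $G_{\beta,\eta}-I$ by Schur's test.

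\textbf{Closed form of the kernel.} Mehler's formula gives, for $|r|<1$,
\[
M_r(x,y)=\sum_m r^m\psi_m(x)\psi_m(y)=\frac{\sqrt2}{\sqrt{1-r^2}}\exp\Big(-\pi\frac{(1+r^2)(x^2+y^2)-4rxy}{1-r^2}\Big).
\]
Only the indices $m=4j$ should survive, so I average over fourth roots of unity:
\[
\sum_j r^{4j}\psi_{4j}(x)\psi_{4j}(y)=\tfrac14\sum_{k=0}^3 M_{i^k r}(x,y).
\]
With $r^4=\e^{-t}$, the definition of $a_j$ gives
\[
K_\eta(x,y)=\int_0^{\e^{-2}}\frac{\e^{-t}}{\sqrt t[\log(1/t)]^{1+\eta}}\cdot\tfrac14\sum_{k}M_{i^k\e^{-t/4}}(x,y)\,\dd t.
\]
As $t\to0$, the terms $k=0,2$ behave like $t^{-1/2}\exp(-c(x\mp y)^2/t)\,\e^{-\pi t(x\pm y)^2/8}$. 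The terms $k=1,3$ have $r=\pm i\e^{-t/4}$, so $r^2\approx-1$ and $1+r^2\approx t/2$. Their exponent is purely oscillatory: it is approximately $\pm 2\pi i xy$ plus a small correction, with modulus of order $t^{-1/2}$ times a Gaussian damping $\e^{-c t(x^2+y^2)}$.

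\textbf{Splitting the kernel.} This yields $K_\eta=P_\eta+O_\eta$, where $P_\eta$ is the positive part (a heat-kernel average in $x-y$ and $x+y$) and $O_\eta$ is the oscillating part (roughly $\cos(2\pi xy)$ times an average over $t$).

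\textbf{Diagonal.} For the normalization, I expect
\[
K_\eta(x,x)\asymp\int_{x^{-2}}^{\e^{-2}}\frac{\dd t}{t[\log(1/t)]^{1+\eta}}+\cdots\asymp\frac{1}{\eta}\Big((\log x)^{-\eta}\text{-type main term}\Big),
\]
which blows up like $1/\eta$. This is the key mechanism. The off-diagonal values $P_\eta(x,y)$ only see $t\gtrsim(x-y)^2$, so after normalization they are a factor of $\eta$ smaller. The computation to make precise is that
\[
\frac{P_\eta(x,y)}{\sqrt{K(x,x)K(y,y)}}\lesssim \eta\,\frac{[\log(1/|x-y|^2)]^{-\eta}\text{-ish}}{\dots},
\]
and more precisely that it is dominated by $\eta$ times a kernel of the shape
\[
\frac{1}{|x-y|\,\max(x,y)}\quad\text{(with log corrections)}.
\]

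\textbf{Schur's test on the nodes.} I then apply Schur's test to these off-diagonal entries, using the node spacing
\[
\lambda_{n+1}-\lambda_n\asymp\frac{1}{\sqrt n(\log n)^\beta}.
\]
The row sums should be bounded by $\eta\sum_{m\ne n}\frac{1}{|n-m|}\cdot(\text{weights})$. The log factors coming from density are then cancelled against $[\log(1/t)]^{-1-\eta}$. At $\beta=1/2$ the density $\lambda_n\,\Delta\lambda_n\sim 1/(2\log n)$ exactly matches the logarithmic weight. In that case one gets a divergent factor of order $1/\eta$ at most, giving the bound $\sqrt\eta$, which I would obtain by optimizing a splitting threshold in $t$. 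The auxiliary point $\tfrac12$ and the small $n$ are handled separately: they are finitely many, and their entries are still $O(\eta)$ because of the $1/\eta$ diagonal growth.

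\textbf{Oscillatory part.} For $O_\eta$ I would use the oscillation $\cos(2\pi\lambda_n\lambda_m)$ and sum by parts over $m$. The phase increments are $2\pi\lambda_n(\lambda_{m+1}-\lambda_m)$, which are not near multiples of $2\pi$ thanks to the specific arithmetic of $\sqrt n$ compressed by logs. Alternatively, I would integrate in $t$ first, since the $t$-average of the correction phase gives decay $\min(1,(xy\,t)^{-1})$-type factors. The target is again a Schur bound of $C\eta$ (respectively $C\sqrt\eta$).

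I expect the oscillating part to be the main obstacle. Near the anti-diagonal, the $k=1,3$ terms are not small in absolute value, so cancellation is genuinely needed. Controlling $\sum_m\cos(2\pi\lambda_n\lambda_m)w(m)$ uniformly, especially for $\beta=0$ where $\lambda_n\lambda_m=\sqrt{nm}$ can be close to an integer, requires care. For that case I would first try van der Corput estimates applied to the phase $\sqrt{nm}$ in $m$.
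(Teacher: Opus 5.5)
The genuine gap is the oscillatory part. You identify it as the main obstacle but give no argument for it. Moreover, the route you aim at, a Schur bound obtained from summation by parts, van der Corput, or averaging in $t$, cannot work.

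The weight $\e^{-\pi\tanh(t/4)(x^2+y^2)}$ confines the $t$-integral to $t\lesssim(1+x^2+y^2)^{-1}$, where $1-\sech(t/4)=O(t^2)$. So the phase correction is $O((1+x^2+y^2)^{-1})$, and the $t$-average produces no extra decay:
\[
K_\eta^{\mathrm{osc}}(x,y)=\cos(2\pi xy)\,W(x^2+y^2)+O\bigl((1+x^2+y^2)^{-3/2}\bigr),\qquad W(A)\asymp(1+A)^{-1/2}[\log(\e+A)]^{-1-\eta}.
\]
After normalization, the entries in a dyadic block $N\le n,m<2N$ (with $L=\log(\e+N)$) have size about $\eta L^{\beta-1}N^{-1/2}|\cos(2\pi\lambda_n\lambda_m)|$. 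Since $|\cos(2\pi\lambda_n\lambda_m)|$ is not small on average, the absolute row sums are of order $\eta\sqrt N\,L^{\beta-1}\to\infty$. So the matrix of absolute values is unbounded on $\ell^2$, and no weighted Schur test can succeed.

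Estimates for $\sum_m\cos(2\pi\lambda_n\lambda_m)w(m)$ with a fixed weight $w$ do not help either, because you must control $\sum_m F_{\beta,\eta}(n,m)c_m$ for arbitrary $c\in\ell^2$. The non-resonance you hope for is also false: for $\beta=0$ the increments $\lambda_n(\lambda_{m+1}-\lambda_m)\approx\sqrt n/(2\sqrt m)$ come close to integers when $m\ll n$.

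What is missing is a genuinely $\ell^2$ estimate. The paper moves the $t$-integral outside the norm by Minkowski's inequality. It then reduces to the dyadic lower-triangular part $F^-$ via $F=F^-+(F^-)^\ast-\sum_kP_kF^-P_k$, and proves the large-sieve inequality
\[
\sum_{N\le n<2N}\Bigl|\sum_{m<2N}b_m\cos(2\pi v\lambda_n\lambda_m)\Bigr|^2\le C\Bigl(N\sum_{m<2N}|b_m|^2+\sqrt N\,L^\beta\sum_{m<2N}\sqrt{m+1}\,[\log(\e+m)]^\beta|b_m|^2\Bigr).
\]
This follows from a sampling lemma (the $\lambda_n$ are $h$-separated, $h\asymp N^{-1/2}L^{-\beta}$) and the Montgomery--Vaughan Hilbert inequality with individual gaps $\Delta_m^{-1}\asymp\sqrt{m+1}\,[\log(\e+m)]^\beta$. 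Only separation is used, not any arithmetic of $\lambda_n\lambda_m$.

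Several intermediate claims also need correcting.

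\textbf{Modulus of the oscillating terms.} For $z=\pm i\e^{-t/4}$ one has $1-z^2=1+\e^{-t/2}\approx 2$. Hence $|M_{\pm ir}(x,y)|=p(t)\e^{-\pi\tanh(t/4)(x^2+y^2)}$ with $p(t)\asymp1$, not of order $t^{-1/2}$. This bounded prefactor is exactly what gives $|K_\eta^{\mathrm{osc}}(x,x)|\le C\eta(1+x^2)^{-1/2}[\log(\e+x^2)]^{-1}K_\eta(x,x)$. With a prefactor $t^{-1/2}$, the oscillatory terms would be as large as the positive ones, and their normalized off-diagonal entries would not be small.

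\textbf{Source of $\sqrt\eta$.} The $\sqrt\eta$ at $\beta=\tfrac12$ does not arise in the positive part, whose row sums are $\le C\eta L^{2\beta-1}\le C\eta$ for all $\beta\le\tfrac12$. In any case, a Schur row sum carrying an extra factor $\eta^{-1}$ would give $O(1)$, not $O(\sqrt\eta)$. The $\sqrt\eta$ comes from the oscillatory part, where $\sum_{k\gtrsim\log m}k^{-1-\eta}\asymp\eta^{-1}[\log(\e+m)]^{-\eta}$ enters the squared estimate $\norm{F^-c}^2\le C\eta^2(S_1+S_2)$.

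\textbf{Majorant for the positive part.} Your majorant $\eta/(|x-y|\max(x,y))$ is too crude. On the nodes it is about $\eta/|q_n-q_m|\approx\eta L^{2\beta}/|n-m|$, whose row sums diverge logarithmically. With $d=|q_n-q_m|$ and $q_m\in[q_n/2,2q_n]$, the true normalized entries are $\lesssim\eta L^{-1}(1+\log(1/d))$ for $d\le1$ and $\lesssim\eta L^{-1}\e^{-cd}$ for $d\ge1$. So effectively only $|n-m|\lesssim L^{2\beta}$ contributes.
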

The next three sections will establish Theorem~\ref{thm:mainmain}. 
\section{The reproducing kernel of $\cH_\eta$}\label{sec:RK}

For \(|z|<1\), Mehler's formula in the normalization
\eqref{eq:hermite} is
\begin{equation}
 M_z(x,y)
 \coloneqq 
 \sqrt{\frac2{1-z^2}}\,
 \exp\left[
 -\pi
 \frac{(1+z^2)(x^2+y^2)-4zxy}{1-z^2}
 \right]
 =
 \sum_{m=0}^{\infty}z^m\psi_m(x)\psi_m(y).
\label{eq:mehler}
\end{equation}
The square root is the analytic branch equal to \(\sqrt2\) at
\(z=0\).  See \cite[Eq.~18.18.28]{DLMF}. Mehler's formula yields an explicit expression for $K_\eta(x,y)$, much in the same way as it does in Zelent's computation of the kernel of the space appearing in the Kulikov--Nazarov--Sodin uniqueness theorem \cite{Zelent}.

\begin{lemma}\label{prop:filtered}
For all \(x,y\in\R\),
\begin{equation}
 K_\eta(x,y)
 =
 \frac14
 \int_0^{\e^{-2}}
 \frac{\e^{-t}}
 {\sqrt t\,[\log(1/t)]^{1+\eta}}
 \sum_{\ell=0}^{3}
 M_{i^\ell\e^{-t/4}}(x,y)\,\dd t.
\label{eq:filtered}
\end{equation}
\end{lemma}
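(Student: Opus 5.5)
The plan is to combine the definition of $a_j$ with the series \eqref{eq:kernel-series} from Lemma~\ref{lem:evaluation}, interchange sum and integral, and identify the resulting inner sum through Mehler's formula \eqref{eq:mehler}, using a root-of-unity filter to isolate the Hermite indices divisible by $4$.

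\textbf{Step 1: the filter.} For $|z|<1$ we have the elementary identity
\[
 \frac14\sum_{\ell=0}^{3} M_{i^\ell z}(x,y)
 =\sum_{m=0}^\infty z^m\psi_m(x)\psi_m(y)\,\frac14\sum_{\ell=0}^3 i^{\ell m}
 =\sum_{j=0}^\infty z^{4j}\psi_{4j}(x)\psi_{4j}(y).
\]
It holds because $\frac14\sum_{\ell}i^{\ell m}$ equals $1$ when $4\mid m$ and $0$ otherwise. The rearrangement is legitimate since the Mehler series converges absolutely for $|z|<1$, which follows from the standard bound $|\psi_m(x)|\le C$ (or $\le C_E$ on compacts). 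Taking $z=\e^{-t/4}$ with $t>0$ gives $z^{4j}=\e^{-jt}$, so
\[
 \e^{-t}\cdot\frac14\sum_{\ell=0}^3 M_{i^\ell \e^{-t/4}}(x,y)=\sum_{j\ge0}\e^{-(j+1)t}\psi_{4j}(x)\psi_{4j}(y).
\]

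\textbf{Step 2: integrate against the weight.} Multiply the last identity by $t^{-1/2}[\log(1/t)]^{-1-\eta}$ and integrate over $(0,\e^{-2})$. Formally, the right-hand side becomes $\sum_j a_j\psi_{4j}(x)\psi_{4j}(y)=K_\eta(x,y)$, which is \eqref{eq:filtered}.

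\textbf{Step 3: justify the interchange.} This is the only real point. I would use Fubini--Tonelli with absolute values: it suffices that
\[
 \sum_j \int_0^{\e^{-2}}\frac{\e^{-(j+1)t}}{\sqrt t[\log(1/t)]^{1+\eta}}|\psi_{4j}(x)\psi_{4j}(y)|\,\dd t=\sum_j a_j|\psi_{4j}(x)\psi_{4j}(y)|<\infty.
\]
Applying Cauchy--Schwarz, this sum is at most $\bigl(\sum_j a_j\psi_{4j}(x)^2\bigr)^{1/2}\bigl(\sum_j a_j\psi_{4j}(y)^2\bigr)^{1/2}$, and each factor is finite by \eqref{eq:evaluation-sum} with $E=\{x\}$ or $E=\{y\}$. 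Hence Tonelli applies, the integral of the series equals the series of the integrals, and the identity holds pointwise for all $x,y\in\R$.

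A minor point is that the $t$-integrand on the left of \eqref{eq:filtered} is then automatically integrable, being the absolutely convergent series from Step 1 times the weight. No separate estimate of $M_{i^\ell \e^{-t/4}}$ near $t=0$ is needed; its possible growth there is dominated by the sum above.
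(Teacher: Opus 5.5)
Your proposal is correct and follows the paper's proof: apply the root-of-unity filter to Mehler's formula at $z=\e^{-t/4}$, multiply by the weight, integrate, and interchange sum and integral. The paper only says the interchange is ``justified by absolute convergence''; your Tonelli argument, using Cauchy--Schwarz and \eqref{eq:evaluation-sum} at the points $x$ and $y$, supplies exactly that justification.
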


\begin{proof}
We start from the fact that
\begin{equation} \label{eq:start}
 \frac14\sum_{\ell=0}^{3}
 M_{i^\ell\e^{-t/4}}(x,y)
 =
 \sum_{j=0}^{\infty}
 \e^{-jt}\psi_{4j}(x)\psi_{4j}(y).
\end{equation}
Multiplying by 
\[ \frac{\e^{-t}}
 {\sqrt t\,[\log(1/t)]^{1+\eta}} \] and integrating, we arrive at \eqref{eq:kernel-series} by interchanging the integral and the sum on the right-hand side of \eqref{eq:start} which is justified by absolute convergence.
\end{proof}

Set \(r\coloneqq \e^{-t/4}\).  We get from \eqref{eq:mehler} that
\begin{equation}
 M_r(x,y)
 =
 \sqrt{\frac2{1-r^2}}\,
 \exp\left[
 -\pi\frac{1+r}{2(1-r)}(x-y)^2
 -\pi\frac{1-r}{2(1+r)}(x+y)^2
 \right],
\label{eq:local-exact}
\end{equation}
and we get $M_{-r}(x,y)$ by interchanging \(x-y\) and
\(x+y\) in this formula.  The two remaining terms satisfy
\begin{equation}
 M_{ir}(x,y)+M_{-ir}(x,y)
 =
 2p(t)\e^{-\pi u(t)(x^2+y^2)}
 \cos(2\pi v(t)xy),
\label{eq:fourier-exact}
\end{equation}
where
\begin{equation}
 p(t)=\left(\frac2{1+\e^{-t/2}}\right)^{1/2},
 \qquad
 u(t)=\tanh(t/4),
 \qquad
 v(t)=\sech(t/4).
\label{eq:puv}
\end{equation}
In particular,
\begin{equation}
 p(t)\asymp1,\qquad
 u(t)\asymp t,\qquad
 0<v_0\leq v(t)\leq1
\label{eq:puv-bounds}
\end{equation}
on \(0<t<\e^{-2}\).

We write
\begin{equation}
 K_\eta
 =K_\eta^{+}
  +K_\eta^{-}
  +K_\eta^{\mathrm{osc}}
\label{eq:branch-decomposition}
\end{equation}
for the three contributions to $K_\eta$ from respectively $M_r$, $M_{-r}$, and $M_{ir}+M_{-ir}$. Here $K_\eta^{\mathrm{osc}}$ is what we will refer to as the oscillating part of $K_\eta$. We see that $K_\eta^{+}
  +K_\eta^{-}$ is positive; we will refer to it as the non-oscillating part of $K_\eta$.

The following estimates will be required. We omit the proof which is based on straightforward computations.
\begin{lemma}\label{lem:scalar-integrals}
For
\(0<\eta\leq\eta_0\) and \(A\geq0\),
\begin{equation}
 \int_0^{\e^{-2}}
 \frac{\e^{-At}}
 {t[\log(1/t)]^{1+\eta}}\,\dd t
 \asymp
 \frac1\eta[\log(\e+A)]^{-\eta},
\label{eq:diag-integral}
\end{equation}
and
\begin{equation}
 \int_0^{\e^{-2}}
 \frac{\e^{-At}}
 {\sqrt t\,[\log(1/t)]^{1+\eta}}\,\dd t
 \leq
 \frac{C}
 {\sqrt{1+A}\,[\log(\e+A)]^{1+\eta}}.
\label{eq:fourier-integral}
\end{equation}
The constants in \eqref{eq:diag-integral} are uniform in
\(A\) and \(\eta\).
\end{lemma}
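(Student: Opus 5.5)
The two estimates are one-variable calculations. In both we substitute $s=\log(1/t)$, so that $t=\e^{-s}$, $\dd t/t=-\dd s$ and $s$ ranges over $[2,\infty)$. The plan is to compare $At$ with $1$ by splitting the $t$-range at $t\approx 1/A$. When $A\le \e^2$ the factor $\e^{-At}$ lies between $\e^{-1}$ and $1$ on $(0,\e^{-2})$, so we may drop it.

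\emph{First estimate \eqref{eq:diag-integral}.} For $A\le \e^{2}$ the integral is comparable to
\[
\int_2^\infty s^{-1-\eta}\,\dd s=\frac{2^{-\eta}}{\eta}\asymp \frac1\eta ,
\]
uniformly for $\eta\le\eta_0<1$. This matches the right-hand side, since $[\log(\e+A)]^{-\eta}\asymp1$ there.

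For $A>\e^{2}$ we first take the upper bound. On $t\le 1/A$, i.e.\ $s\ge \log A$, we use $\e^{-At}\le1$ and obtain
\[
\frac1\eta(\log A)^{-\eta}.
\]
On $t\in[1/A,\e^{-2}]$ we use $[\log(1/t)]^{-1-\eta}\le 2^{-1-\eta}$ together with
\[
\int_{1/A}^\infty \e^{-At}\,\frac{\dd t}{t}=\int_1^\infty \e^{-u}\,\frac{\dd u}{u}=O(1).
\]
Since $\eta(\log A)^{\eta}\le \eta A^{\eta}$ is not bounded in general, we need $O(1)\le C\eta^{-1}(\log A)^{-\eta}$. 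This holds because $x\mapsto x^{-\eta}/\eta$ is minimised over $x\ge 1$ only at infinity, so we instead check it via $(\log A)^{-\eta}/\eta\ge c$ whenever $\eta\le\eta_0$. If that check fails, we use the sharper bound on this range: over $t\in[1/A,\e^{-2}]$ we get at most
\[
C(\log A)^{-1-\eta}\int_1^{\sqrt A}\e^{-u}\,\frac{\dd u}{u}\;+\;\text{(the tail } u\ge\sqrt A\text{, which is exponentially small)} .
\]
For $\sqrt A\,t$ we have $\log(1/t)\ge \tfrac12\log A$, so this whole piece is $O\bigl((\log A)^{-1-\eta}\bigr)$, which is $\le C\eta^{-1}(\log A)^{-\eta}$ since $\eta\le1\le\log A$.

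For the lower bound we restrict to $t\le 1/A$, where $\e^{-At}\ge \e^{-1}$. This gives at least $\e^{-1}\eta^{-1}(\log A)^{-\eta}$. Finally $\log A\asymp\log(\e+A)$ and $(\log A)^{-\eta}\asymp[\log(\e+A)]^{-\eta}$ uniformly, since $(\log(\e+A)/\log A)^{\eta}\le 2^{\eta_0}$. All constants are uniform.

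\emph{Second estimate \eqref{eq:fourier-integral}.} This is exactly the proof of Lemma~\ref{lem:aj} with $J$ replaced by $A$, and only the upper bound is needed. For $A\le \e^2$ the integrand is at most $t^{-1/2}2^{-1-\eta}$, which is integrable. For $A>\e^{2}$ we substitute $\sigma=At$ to get
\[
A^{-1/2}\int_0^{A\e^{-2}}\e^{-\sigma}\sigma^{-1/2}\,[\log(A/\sigma)]^{-1-\eta}\,\dd\sigma ,
\]
and split at $\sigma=\sqrt A$.
\begin{itemize}
\item On $\sigma\le\sqrt A$ we have $\log(A/\sigma)\ge\tfrac12\log A$, so this part is $\le C A^{-1/2}(\log A)^{-1-\eta}$, because $\int_0^\infty\e^{-\sigma}\sigma^{-1/2}\,\dd\sigma<\infty$.
\item On $\sigma\ge\sqrt A$ we have $\log(A/\sigma)\ge 2$, and the factor $\e^{-\sigma}\le \e^{-\sqrt A}$ makes this part $\le C\e^{-\sqrt A/2}$, which is negligible.
\end{itemize}
Since $\sqrt{1+A}\asymp\sqrt A$ and $\log(\e+A)\asymp\log A$, the claim follows.

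The only delicate point is keeping every constant in \eqref{eq:diag-integral} uniform as $\eta\to0$. The key is to attach the $1/\eta$ factor only to the tail $s\ge\log A$, and to bound everything else by $O\bigl((\log A)^{-1}\bigr)$, which is absorbed into $\eta^{-1}(\log A)^{-\eta}$ because $\eta(\log A)^{\eta-1}\le1$ for $\eta\le1$ and $\log A\ge 1$.
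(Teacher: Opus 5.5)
Your computation is correct and is the routine argument the paper omits: the second estimate is the substitution-and-split proof of Lemma~\ref{lem:aj}. For the first estimate, the decisive step is your split of $[1/A,\e^{-2}]$ at $t=A^{-1/2}$, which gives $O\bigl((\log A)^{-1-\eta}\bigr)$, and this is absorbed into $\eta^{-1}(\log A)^{-\eta}$. You should delete the intermediate claim that $(\log A)^{-\eta}/\eta\ge c$, because it is false (it tends to $0$ as $A\to\infty$ with $\eta$ fixed). Consequently the crude $O(1)$ bound on that range never suffices, and the sharper split is necessary rather than a fallback.
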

We may now establish the following bound for the diagonal of $K_\eta$. 
\begin{lemma}\label{prop:diagonal}
If \(\eta_0>0\) is sufficiently small, then
\begin{equation}
 K_\eta(x,x)
 \asymp
 \frac1\eta[\log(\e+x^2)]^{-\eta}
\label{eq:diagonal}
\end{equation}
uniformly for \(x\) in \(\R\) and \(0<\eta\leq\eta_0\).
\end{lemma}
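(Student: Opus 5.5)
We use the decomposition \eqref{eq:branch-decomposition} at $y=x$, via Lemma~\ref{prop:filtered} and the explicit formulas \eqref{eq:local-exact}--\eqref{eq:puv}. With $r=\e^{-t/4}$ we have $1-r\asymp t$ and $1-r^2\asymp t$ on $(0,\e^{-2})$. Hence, on the diagonal,
\[
 M_r(x,x)=\sqrt{\tfrac{2}{1-r^2}}\exp\Bigl[-2\pi\tfrac{1-r}{1+r}x^2\Bigr],
\]
and this is bounded above and below by $t^{-1/2}\e^{-C x^2 t}$ and $t^{-1/2}\e^{-c x^2 t}$, up to absolute constant factors. Multiplying by the weight $\e^{-t}t^{-1/2}[\log(1/t)]^{-1-\eta}/4$ and integrating, we get
\[
 c\int_0^{\e^{-2}}\frac{\e^{-(1+Cx^2)t}}{t[\log(1/t)]^{1+\eta}}\dd t\le K^+_\eta(x,x)\le C\int_0^{\e^{-2}}\frac{\e^{-(1+cx^2)t}}{t[\log(1/t)]^{1+\eta}}\dd t .
\]
By \eqref{eq:diag-integral} with $A=1+Cx^2$ or $A=1+cx^2$, both sides are $\asymp \eta^{-1}[\log(\e+x^2)]^{-\eta}$. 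Here we use that $[\log(\e+Cx^2)]^{-\eta}\asymp[\log(\e+x^2)]^{-\eta}$ uniformly in $\eta\le\eta_0<1$, since the ratio of the two logarithms is bounded above and below.

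The term $K_\eta^-(x,x)$ comes from $M_{-r}(x,x)$. This equals $\sqrt{2/(1-r^2)}\exp[-2\pi\frac{1+r}{1-r}x^2]$, which is nonnegative and at most $Ct^{-1/2}\e^{-c x^2/t}$. Since it is nonnegative, it only helps the lower bound. For the upper bound, we bound it crudely by $Ct^{-1/2}$ and integrate against the weight. This gives at most $\int_0^{\e^{-2}}t^{-1}[\log(1/t)]^{-1-\eta}\dd t\asymp 1/\eta$, which is too large when $x$ is large. We therefore use the Gaussian factor more carefully. For $x^2\ge 1$ we have $\e^{-cx^2/t}\le \e^{-c/t}$, so the $t$-integral is $O(1)$. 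It is also trivially $O(\eta^{-1})$ in general. Since $\eta^{-1}[\log(\e+x^2)]^{-\eta}\ge \eta^{-1}[\log(\e+x^2)]^{-\eta_0}$, we need a bound of the form $O(\eta^{-1}[\log(\e+x^2)]^{-\eta})$. For $x^2\le1$ the crude bound suffices; for $x^2\ge1$ the bound $O(1)$ suffices once $\eta_0$ is small, because $\eta^{-1}[\log(\e+x^2)]^{-\eta}$ can be small only when $\eta\log\log x^2$ is large.

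This last point is the main obstacle: the right-hand side $\eta^{-1}[\log(\e+x^2)]^{-\eta}$ may fall below any constant for huge $x$. So $K^-$ and $K^{\mathrm{osc}}$ must be shown to be $o$ of the main term, not merely $O(1)$. For $K^-$ this is fine, since the bound is actually $\le C\int_0^{\e^{-2}} t^{-1/2}\e^{-cx^2/t}\dd t\le C\e^{-c x^2}$, which decays much faster. For $K^{\mathrm{osc}}(x,x)$, \eqref{eq:fourier-exact} and \eqref{eq:puv-bounds} give the bound $|K^{\mathrm{osc}}(x,x)|\le C\int_0^{\e^{-2}}\e^{-(1+c x^2)t}t^{-1/2}[\log(1/t)]^{-1-\eta}\dd t$. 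By \eqref{eq:fourier-integral} this is $\le C(1+x^2)^{-1/2}[\log(\e+x^2)]^{-1-\eta}$. We must compare this with $\eta^{-1}[\log(\e+x^2)]^{-\eta}$. The ratio is at most $C\eta(1+x^2)^{-1/2}[\log(\e+x^2)]^{-1}\le C\eta_0$. So for $\eta_0$ small the oscillating part is at most half the lower bound for $K^+_\eta(x,x)$. Combining the three pieces gives \eqref{eq:diagonal} uniformly in $x$ and $0<\eta\le\eta_0$.
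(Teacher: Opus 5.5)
Your proof is correct and is essentially the paper's argument. You use two-sided bounds $t^{-1/2}\e^{-Ctx^2}\lesssim M_r(x,x)\lesssim t^{-1/2}\e^{-ctx^2}$ with \eqref{eq:diag-integral} for the main term, and \eqref{eq:fourier-integral} to show that $|K_\eta^{\mathrm{osc}}(x,x)|$ is at most $C\eta$ times that term.

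The paper handles $K_\eta^-$ more simply. Since $\frac{1+r}{1-r}\ge\frac{1-r}{1+r}$, one has $0\le M_{-r}(x,x)\le M_r(x,x)$, hence $0\le K_\eta^-(x,x)\le K_\eta^+(x,x)$, so your case split is unnecessary. This also sidesteps a slip in your integrand, which should be $t^{-1}[\log(1/t)]^{-1-\eta}\e^{-cx^2/t}$ rather than $t^{-1/2}\e^{-cx^2/t}$; your conclusion $O(\e^{-cx^2})$ for $x^2\ge1$ still holds.
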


\begin{proof}
For \(0<t<\e^{-2}\), there exist positive constants $c_1, C_1$ and $c_2, C_2$ such that
\begin{equation}
 C_1 t^{-1/2}\e^{-c_1t x^2} \le M_r(x,x) \le 
 C_2 t^{-1/2}\e^{-c_2 t x^2} 
 \label{eq:local-diagonal}
\end{equation}
for all $x$. Since 
\[
 0\leq M_{-r}(x,x)\leq M_r(x,x),
\]
we get on multiplying $M_{-r}(x,x)+ M_r(x,x)$ by the measure in
\eqref{eq:filtered}, integrating, and invoking \eqref{eq:diag-integral} of Lemma~\ref{lem:scalar-integrals},
\begin{equation}
 K_\eta^{+}(x,x)+K_\eta^-(x,x)
 \asymp
 \frac1\eta[\log(\e+x^2)]^{-\eta}.
\label{eq:local-diagonal-size}
\end{equation}
On the other hand, \eqref{eq:fourier-exact},
\eqref{eq:puv-bounds}, and
\eqref{eq:fourier-integral} give
\[
 |K_\eta^{\mathrm{osc}}(x,x)|
 \leq
 \frac{C}
 {\sqrt{1+x^2}\,[\log(\e+x^2)]^{1+\eta}}.
\tag{3.15}\label{eq:osc-diagonal}
\]
The ratio of \eqref{eq:osc-diagonal} to
\eqref{eq:local-diagonal-size} is at most
\[
 \frac{C\eta}
 {\sqrt{1+x^2}\log(\e+x^2)} \le C\eta,
\]
and so we obtain \eqref{eq:diagonal} from \eqref{eq:local-diagonal-size} by choosing $\eta_0 \le 1/(2C)$ with $C$ as in the preceding bound. 
\end{proof}

\section{The non-oscillatory part of the Gramian}\label{sec:non}

Let \(G^{\mathrm{non}}_{\beta,\eta}\) be the non-oscillating part of the Gramian $G_{\beta,\eta}$. In other words, the entries of $G^{\mathrm{non}}_{\beta,\eta}$ are 
\[
 \frac{
 K_\eta^{+}(x,y)
 +K_\eta^{-}(x,y)}
 {\sqrt{K_\eta(x,x)K_\eta(y,y)}}, \qquad x,y\in \Lambda^\ast(\beta).
\]
Only its off-diagonal part $\off G^{\mathrm{non}}_{\beta,\eta}$ is relevant, because the diagonal of the full matrix $G_{\beta,\eta}$ is simply the identity matrix $I$.
The following is the main result of this section.
\begin{lemma}\label{prop:local}
Uniformly for \(0\leq\beta\leq1/2\) and
\(0<\eta\leq\eta_0\),
\begin{equation}
 \norm{\off G^{\mathrm{non}}_{\beta,\eta}}_{\ell^2\to\ell^2}
 \leq C\eta.
\label{eq:local-operator}
\end{equation}
\end{lemma}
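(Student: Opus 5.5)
Since every entry of $\off G^{\mathrm{non}}_{\beta,\eta}$ is nonnegative, I would use the Schur test with constant weights. By symmetry it then suffices to show that
\[
 \sup_{x\in\Lambda^\ast(\beta)}\sum_{y\in\Lambda^\ast(\beta),\,y\neq x}\frac{K_\eta^{+}(x,y)+K_\eta^{-}(x,y)}{\sqrt{K_\eta(x,x)K_\eta(y,y)}}\le C\eta .
\]
For the denominator I would use Lemma~\ref{prop:diagonal}. Adjacent points $x,y$ of $\Lambda^\ast(\beta)$ satisfy $\log(\e+x^2)\asymp\log(\e+y^2)$, so the denominator is $\asymp \eta^{-1}L^{-\eta}$ with $L=\log(\e+x^2)$. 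Distant points need a little extra care here, but there the numerator is exponentially small.

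\textbf{Step 1: pointwise bound on the numerator.} Use \eqref{eq:local-exact} with $1-r\asymp t$. Then $K_\eta^+(x,y)$ is bounded by
\[
 C\int_0^{\e^{-2}}\frac{\dd t}{t[\log(1/t)]^{1+\eta}}\exp\Bigl(-c\frac{(x-y)^2}{t}-ct(x+y)^2\Bigr).
\]
For $x,y\ge0$ the term $K_\eta^-$ has the same bound with $x-y$ and $x+y$ interchanged; since $x+y\ge|x-y|$, it is dominated by the same estimate. Put $d=|x-y|$ and $s=x+y$. I would split into two cases.

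\emph{Case $ds\le1$.} The effective range is $d^2\lesssim t\lesssim s^{-2}$, giving a contribution
\[
 \frac1\eta\Bigl([\log(\e+s^2)]^{-\eta}-[\log(1/d^2)]^{-\eta}\Bigr)\le C L^{-1-\eta}\log\frac{\log(1/d)}{\log(\e+ s)} ,
\]
plus tails controlled as in Lemma~\ref{lem:scalar-integrals}.

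\emph{Case $ds\ge1$.} Optimizing the exponent near $t\asymp d/s$ gives at most $C\e^{-c\,ds}L^{-1-\eta}$.

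After normalization, the entry is therefore at most
\[
 C\eta\,L^{-1}\Bigl(\e^{-c\,ds}+\mathbf 1_{ds\le1}\log\frac{\log(1/d)}{\log(\e+s)}\Bigr).
\]

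\textbf{Step 2: counting.} Near $x$ the spacing of $\lambda_n(\beta)$ is $\asymp 1/(x[\log x]^{2\beta})$. Hence the number of $y$ with $k\le dx<k+1$ is $\lesssim [\log(\e+x)]^{2\beta}$ for each $k$. Summing the exponential terms gives
\[
 \eta L^{-1}[\log(\e+x)]^{2\beta}\sum_k \e^{-ck}\le C\eta\,[\log(\e+x)]^{2\beta-1}\le C\eta ,
\]
precisely because $\beta\le\tfrac12$. This is the place where the threshold enters.

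For the near terms, with $ds\le1$, I use $d\gtrsim m/(x[\log x]^{2\beta})$ for the $m$-th neighbour. This gives $\log(1/d)\le\log x+C\log\log x+\log(1/m)$, so the logarithmic ratio is $O(\log\log x/\log x)$. The at most $[\log x]^{2\beta}$ such terms then contribute $O(\eta)$ as well.

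\textbf{Step 3: small $x$.} Points with $x$ bounded, including $\tfrac12$ and $\lambda_0=0$, form a finite set with mutual separation bounded below. For them I would bound the entries directly by $C\eta$, using the diagonal size $\asymp 1/\eta$ together with the uniform boundedness of $\int\e^{-cd^2/t}\,\dd t/(t\log^{1+\eta})$ for $d\ge d_0$. Their interaction with large points falls under the exponential case.

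\textbf{Main obstacle.} The delicate step is Step 1 for close neighbours. There one must verify that the difference of the two $(\log)^{-\eta}/\eta$ terms really gains the factor $\eta/\log x$ relative to the diagonal, uniformly in $\eta$. I would handle this with the mean value theorem applied to $u\mapsto u^{-\eta}$.
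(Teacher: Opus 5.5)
Most of your plan is sound and parallels the paper: the Schur test with constant weights, exponential decay for $ds\ge1$ combined with the count of $\lesssim[\log(\e+x)]^{2\beta}$ nodes per unit of $dx$, and the finitely many small nodes. The gap is in the near regime $ds\le1$, where the displayed bound in Step~1 is off by a factor $L$.

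Write $a=\log(\e+s^2)$ and $b=\log(1/d^2)$. Then $\frac1\eta(a^{-\eta}-b^{-\eta})=\int_a^b u^{-1-\eta}\,\dd u$, and for $a\le b\le 2a$ this is comparable to $a^{-\eta}\log(b/a)\asymp a^{-1-\eta}(b-a)$. So the correct bound is $CL^{-\eta}\log\frac{\log(1/d)}{\log(\e+s)}$, or equivalently $CL^{-1-\eta}\bigl(1+\log\frac1{ds}\bigr)$. This bound is sharp, because the integrand is $\gtrsim 1/(t[\log(1/t)]^{1+\eta})$ on $d^2\le t\le s^{-2}$. After normalization, a near entry is therefore of size $\eta L^{-1}\log\frac1{ds}$, not $\eta L^{-2}\log\frac1{ds}$ as your display implies. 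For $\beta=\frac12$, the nearest neighbours have $ds\asymp L^{-1}$, so their entries are of order $\eta L^{-1}\log L$. The gain relative to the diagonal is thus $\eta\log\frac1{ds}/L$, not the factor $\eta/\log x$ anticipated in your last paragraph.

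With the corrected size, the counting in Step~2 no longer closes. Multiplying the worst-case entry $C\eta L^{-1}\log L$ by the $\lesssim L^{2\beta}$ near terms gives $C\eta L^{2\beta-1}\log L$. This is unbounded for $\beta=\frac12$ and not uniform as $\beta\to\frac12$, which is precisely the endpoint the lemma must cover.

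The missing idea is that the logarithmic factor decays along the neighbours. Your own bound $d\gtrsim m/(xL^{2\beta})$ gives $\log\frac1{ds}\le\log\frac{CL^{2\beta}}{m}$ for the $m$th neighbour. By Stirling, $\sum_{m\le K}\bigl(1+\log\frac Km\bigr)=O(K)$, so the near terms sum to $O(\eta L^{2\beta-1})=O(\eta)$, and your argument is repaired.

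The paper builds this cancellation in by summing over $m$ \emph{before} integrating in $t$. The bound $\sum_{m\ne n}\exp[-c(q_n-q_m)^2/s]\le CL^{2\beta}\sqrt s$ turns the singular weight $\dd s/s$ into the integrable $s^{-1/2}\,\dd s$. Since also $[\log(Q/s)]^{-1-\eta}\le CL^{-1-\eta}$ on $s\le\sqrt Q$, this gives $R_n^{\mathrm{ess}}\le C\eta L^{2\beta-1}$ at once, with no split into $ds\le1$ and $ds\ge1$.
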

To prepare for the proof of Lemma~\ref{prop:local}, we briefly record a few simple facts about the geometry of the nodes $\lambda_n(\beta)$ defined in \eqref{eq:nodes}. We will from now on keep  \(0\leq\beta\leq1/2\) fixed, and we therefore suppress the dependence on $\beta$ by writing \(\lambda_n=\lambda_n(\beta)\). It is convenient
to define also the squared nodes 
\begin{equation}
 q_n\coloneqq \lambda_n^2
 =\frac{n}{[\log(\e+n)]^{2\beta}}.
\label{eq:qn}
\end{equation}
The following is an immediate consequence of the mean value theorem.
\begin{lemma}\label{lem:node-geometry}
For \(n\geq1\),
\begin{equation}
 q_{n+1}-q_n
 \asymp [\log(\e+n)]^{-2\beta},
\label{eq:q-spacing}
\end{equation}
and
\begin{equation}
 \lambda_{n+1}-\lambda_n
 \asymp
 \frac1{\sqrt n\,[\log(\e+n)]^\beta}.
\label{eq:lambda-spacing}
\end{equation}
\end{lemma}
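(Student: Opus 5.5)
We apply the mean value theorem to the function $\phi(s)\coloneqq s[\log(\e+s)]^{-2\beta}$ on $[n,n+1]$. Then $q_{n+1}-q_n=\phi'(\xi)$ for some $\xi\in(n,n+1)$. Differentiating gives
\[
 \phi'(s)=[\log(\e+s)]^{-2\beta}\Bigl(1-\frac{2\beta s}{(\e+s)\log(\e+s)}\Bigr).
\]
Since $0\le\beta\le\tfrac12$, $s/(\e+s)<1$, and $\log(\e+s)\ge\log(\e+1)>1$ for $s\ge1$, the bracket lies between $1-1/\log(\e+1)>0$ and $1$. So $\phi'(\xi)\asymp[\log(\e+\xi)]^{-2\beta}$. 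Finally, $\log(\e+\xi)\asymp\log(\e+n)$ for $\xi\in[n,n+1]$ and $n\ge1$, because $\log(\e+n+1)\le 2\log(\e+n)$. This gives \eqref{eq:q-spacing}, with constants independent of $\beta\in[0,1/2]$.

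For \eqref{eq:lambda-spacing} we avoid differentiating again and write
\[
 \lambda_{n+1}-\lambda_n=\frac{q_{n+1}-q_n}{\lambda_{n+1}+\lambda_n}.
\]
For $n\ge1$ we have $\lambda_n\le\lambda_{n+1}\le 2\lambda_n$: monotonicity follows from $\phi'>0$, and $\lambda_{n+1}^2\le (n+1)[\log(\e+n)]^{-2\beta}\le 2q_n$. Hence $\lambda_{n+1}+\lambda_n\asymp\lambda_n=\sqrt n\,[\log(\e+n)]^{-\beta}$. Dividing \eqref{eq:q-spacing} by this quantity yields
\[
 \lambda_{n+1}-\lambda_n\asymp\frac{[\log(\e+n)]^{-2\beta}}{\sqrt n\,[\log(\e+n)]^{-\beta}}=\frac{1}{\sqrt n\,[\log(\e+n)]^{\beta}},
\]
as claimed.

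The only point needing care is that the bracket in $\phi'$ stays bounded below uniformly. This uses $\beta\le\tfrac12$, which gives $2\beta\le1$, together with $\log(\e+s)$ staying bounded away from $1$ for $s\ge1$. Otherwise the argument is routine.
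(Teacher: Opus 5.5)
Your proof is correct and follows the paper's route. The paper simply calls the lemma an immediate consequence of the mean value theorem applied to $s\mapsto s[\log(\e+s)]^{-2\beta}$. Your factorization $\lambda_{n+1}-\lambda_n=(q_{n+1}-q_n)/(\lambda_{n+1}+\lambda_n)$ is just a tidy way to get the second estimate from the first, rather than applying the mean value theorem again to the square root.
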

The asymptotics \eqref{eq:q-spacing} implies that if \(n,m\) are in \( [N,2N)\), \(N\geq2\), and
\(L=\log(\e+N)\), then
\begin{equation}
 |q_n-q_m|
 \geq c|n-m|L^{-2\beta}.
\label{eq:dyadic-q}
\end{equation}
For \(R\geq1\), we then  get
\begin{equation}
 \#\{n:R\leq q_n<R+1\}
 \leq C[\log(\e+R)]^{2\beta}
\label{eq:q-density}
\end{equation}
as well.

The estimates
\[
 M_{\e^{-t/4}}(x,y)
 \leq
 Ct^{-1/2}
 \exp\left[
 -c\frac{(x-y)^2}{t}-ct(x+y)^2
 \right]
\]
and
\[
 M_{-\e^{-t/4}}(x,y)
 \leq
 Ct^{-1/2}
 \exp\left[
 -c\frac{(x+y)^2}{t}-ct(x-y)^2
 \right]
\]
follow directly from \eqref{eq:local-exact}.  We will use them only
for \(0<t<\e^{-2}\). In this range,
\[ \frac{(x-y)^2}{t}+t(x+y)^2 \le \frac{(x+y)^2}{t}+t(x-y)^2, \qquad x,y\ge 0, \]
which means that we may combine the two bounds to get
\begin{equation} 
M_{\e^{-t/4}}(x,y)+M_{-\e^{-t/4}}(x,y)
 \leq
 Ct^{-1/2}
 \exp\left[
 -c\frac{(x-y)^2}{t}-ct(x+y)^2
 \right].
\label{eq:local-bound}
\end{equation}
It is convenient to first estimate the size of $K_\eta^{\mathrm{non}}(x,y)$ when $x$ and $y$ are far apart.
\begin{lemma}\label{lem:far-branches}
Suppose that 
\(y^2\) is not in \([x^2/2,2x^2]\).  If
\(\max(x,y)\geq\tfrac12\), then
\begin{equation}
 K_\eta^{\mathrm{non}}(x,y)
 \leq C\e^{-c\max(x^2,y^2)},
\label{eq:far-local-kernel}
\end{equation}
uniformly for \(0<\eta\leq\eta_0\).
\end{lemma}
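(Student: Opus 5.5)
We bound $K_\eta^{\mathrm{non}}(x,y)$ by integrating the combined bound \eqref{eq:local-bound} against the measure in \eqref{eq:filtered}. This gives
\[
 K_\eta^{\mathrm{non}}(x,y)
 \leq
 C\int_0^{\e^{-2}}
 \frac{1}{t\,[\log(1/t)]^{1+\eta}}
 \exp\left[-c\frac{(x-y)^2}{t}-ct(x+y)^2\right]\dd t,
\]
where we have dropped the harmless factor $\e^{-t}\le 1$. The whole argument then reduces to a lower bound on the exponent that holds uniformly in $t\in(0,\e^{-2})$.

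By symmetry we may assume $y\le x$, so $\max(x,y)=x\geq\tfrac12$. The hypothesis $y^2\notin[x^2/2,2x^2]$ together with $y\le x$ forces $y^2<x^2/2$, that is, $y<x/\sqrt2$. Hence $x-y\geq(1-1/\sqrt2)x\geq c'x$ and $x+y\geq x$. The exponent is therefore at least $c''(x^2/t+tx^2)$.

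For $t<\e^{-2}$ we have $x^2/t\geq \e^{2}x^2$, so the integrand is bounded by
\[
\frac{1}{t[\log(1/t)]^{1+\eta}}\e^{-c x^2/t}.
\]
We split the exponential as $\e^{-cx^2/(2t)}\cdot\e^{-cx^2/(2t)}$. The second factor is at most $\e^{-c\e^2x^2/2}$, and this produces the desired decay $\e^{-c\max(x^2,y^2)}$. For the first factor we use $x\ge\tfrac12$: it is at most $\e^{-c/(8t)}$. Then
\[
\int_0^{\e^{-2}}\frac{\e^{-c/(8t)}}{t[\log(1/t)]^{1+\eta}}\dd t\le\int_0^{\e^{-2}}\frac{\e^{-c/(8t)}}{t}\dd t\le C,
\]
since $[\log(1/t)]^{1+\eta}\ge 2$ on this range, with $C$ independent of $\eta$.

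The main point to watch is that the constant is uniform in $\eta$. A naive bound using \eqref{eq:diag-integral} would lose a factor $1/\eta$, because the measure $\dd t/(t[\log(1/t)]^{1+\eta})$ has mass of order $1/\eta$ near $t=0$. Spending part of the Gaussian factor $\e^{-cx^2/t}$, with $x\geq\tfrac12$, kills the singularity at $t=0$ and avoids this loss. This is precisely why the assumption $\max(x,y)\geq\tfrac12$ is needed.
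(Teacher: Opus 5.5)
Your proof is correct and follows the paper's argument. Both insert \eqref{eq:local-bound} into \eqref{eq:filtered}, use $y^2\notin[x^2/2,2x^2]$ to bound the exponent below by a multiple of $\max(x^2,y^2)(t+t^{-1})$, and discard the logarithmic denominator so that no factor $1/\eta$ is lost. The only difference is cosmetic: the paper bounds the supremum of $t^{-1}\e^{-cM(t+t^{-1})}$ on $(0,\e^{-2})$ by $O(\e^{-c'M})$ using $M\ge\tfrac14$, whereas you split the Gaussian factor into two halves, which amounts to the same thing.
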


\begin{proof}
If \(x^2\geq2y^2\), then
\begin{equation}
 \frac{(x-y)^2}{t}
 +t(x+y)^2
 \geq cx^2 (t+t^{-1}).
\label{eq:far-exponent}
\end{equation}
The same statement holds with \(y\) in place of \(x\) when $y^2\ge 2x^2$.  Inserting this into \eqref{eq:filtered} and using
\eqref{eq:local-bound}, we obtain
\[
 |K_\eta^{\mathrm{non}}(x,y)|
 \leq C\int_0^{\e^{-2}}
 \frac{\exp[-cM(t+t^{-1})]}
 {t[\log(1/t)]^{1+\eta}}\,\dd t,
\]
where \(M=\max(x^2,y^2)\geq\tfrac14\).  Since \(\log(1/t)\geq2\), we may
discard the logarithmic denominator.  Then the result follows because the maximum of the remaining integrand is $O(\e^{-c'M})$. \end{proof}

\begin{proof}[Proof of Lemma~\ref{prop:local}]
Our plan is to establish a uniform bound $O(\eta)$ for the row sums of  the matrix
\(\off G^{\mathrm{non}}_{\beta,\eta}\). Since the matrix is symmetric, we get the same bound for the column sums, and so the Schur test will yield \eqref{eq:local-operator}.

The row sums with respect to either $x=0$ or $x=\tfrac12$ are plainly $O(\eta)$ by Lemma~\ref{prop:diagonal} and Lemma~\ref{lem:far-branches}. Similarly, the terms in the remaining row sums associated with the points $y=0$ and $y=\tfrac12$ are uniformly $O(\eta)$ and may be neglected. Hence it suffices to estimate
\[ R_n\coloneqq \sum_{m\neq n} \frac{K^{\mathrm{non}}_\eta(\lambda_m,\lambda_n)}{\sqrt{K_\eta(\lambda_n,\lambda_n)
 K_\eta(\lambda_m,\lambda_m)}}. \]
In fact, again in view of Lemma~\ref{prop:diagonal} and Lemma~\ref{lem:far-branches} along with \eqref{eq:q-density}, we may restrict further to the finite sum
\[ R^{\mathrm{ess}}_n\coloneqq \sum_{\substack{m\neq n\\ q_m\in [q_n/2,2q_n]}} \frac{K^{\mathrm{non}}_\eta(\lambda_m,\lambda_n)}{\sqrt{K_\eta(\lambda_n,\lambda_n)K_\eta(\lambda_m,\lambda_m)}}. \]

We now assume that $q_n/2\le q_m \le 2q_n$  and set \[
 Q\coloneqq  q_n,\qquad
 L=\log(\e+n).
\]
By Lemma~\ref{prop:diagonal} and
Lemma~\ref{lem:node-geometry},
\begin{equation}
 \frac1{\sqrt{K_\eta(\lambda_n,\lambda_n)
 K_\eta(\lambda_m,\lambda_m)}}
 \leq C\eta L^\eta.
\label{eq:local-normalization}
\end{equation}
Moreover,
\begin{equation}
 (\lambda_n-\lambda_m)^2
 =\frac{(q_n-q_m)^2}
 {(\lambda_n+\lambda_m)^2}
 \asymp\frac{(q_n-q_m)^2}{Q}.
 \label{eq:sep}
\end{equation}
Using \eqref{eq:local-normalization} and \eqref{eq:sep} along with \eqref{eq:local-bound} and
\eqref{eq:filtered}, we find that
\[
 R^{\mathrm{ess}}_n \le C\eta L^\eta
 \int_0^{Q\e^{-2}}
 \frac{\e^{-cs}}
 {s[\log(Q/s)]^{1+\eta}}
 \sum_{\substack{m\ne n\\q_m\in [Q/2,2Q]}}
 \exp\left[-c\frac{(q_n-q_m)^2}{s}\right]\dd s.
\tag{4.14}\label{eq:local-row-integral}
\]
By \eqref{eq:dyadic-q},
\[
 \sum_{\substack{m\ne n\\q_m\in [Q/2,2Q]}}
 \exp\left[-c\frac{(q_n-q_m)^2}{s}\right]
 \leq CL^{2\beta}\sqrt s.
\]
For
\(0<s\leq\sqrt Q\), the logarithm in
\eqref{eq:local-row-integral} is bounded below by \(cL\).  The
remaining range is exponentially small because of \(\e^{-cs}\).
Consequently,
\[ R^{\mathrm{ess}}_n \le
 C\eta L^{\eta+2\beta}
 L^{-1-\eta}
 \int_0^\infty s^{-1/2}\e^{-cs}\,\dd s
 \leq C\eta L^{2\beta-1}
 \leq C\eta.
\]
\end{proof}

\section{The oscillatory part of the Gramian}\label{sec:osc}
We turn to  \(G^{\mathrm{osc}}_{\beta,\eta}\), the oscillating part of the Gramian $G_{\beta,\eta}$. Thus the entries of $G^{\mathrm{osc}}_{\beta,\eta}$ are 
\[
 \frac{
 K_\eta^{\mathrm{osc}}(x,y)}
 {\sqrt{K_\eta(x,x)K_\eta(y,y)}}, \qquad x,y\in \Lambda^\ast(\beta).
\]
The following lemma will complete the proof of Theorem~\ref{thm:mainmain}.
\begin{lemma}\label{prop:osc}
For \(0\leq\beta\leq1/2\) and
\(0<\eta\leq\eta_0\),
\begin{equation}
 \norm{G^{\mathrm{osc}}_{\beta,\eta}}_{\ell^2\to\ell^2}
 \leq  \begin{cases}
  C_\beta\eta,&0\leq\beta<1/2,\\
  C\sqrt\eta,&\beta=1/2.
 \end{cases}
\label{eq:osc-operator}
\end{equation}
\end{lemma}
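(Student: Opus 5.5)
Since $G^{\mathrm{osc}}_{\beta,\eta}$ is symmetric, I would bound its row sums, aiming for $O(\eta)$ when $\beta<1/2$ and $O(\sqrt\eta)$ when $\beta=1/2$, and then conclude with the Schur test, as in Lemma~\ref{prop:local}. A crude absolute-value bound will not suffice near the diagonal region $xy\lesssim 1/t$, so the oscillation of $\cos(2\pi v(t)xy)$ in $t$ has to be exploited.

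\textbf{Reduction to an integral in $t$.} By \eqref{eq:filtered} and \eqref{eq:fourier-exact},
\[
 K^{\mathrm{osc}}_\eta(x,y)=\frac12\int_0^{\e^{-2}}\frac{\e^{-t}p(t)}{\sqrt t\,[\log(1/t)]^{1+\eta}}\,\e^{-\pi u(t)(x^2+y^2)}\cos(2\pi v(t)xy)\,\dd t .
\]
The crude estimate obtained from \eqref{eq:fourier-integral} with $A\asymp x^2+y^2$ is
\[
 |K^{\mathrm{osc}}_\eta(x,y)|\le \frac{C}{\sqrt{1+x^2+y^2}\,[\log(\e+x^2+y^2)]^{1+\eta}} .
\]
Lemma~\ref{prop:diagonal} supplies the normalization factor $C\eta\,[\log(\e+x^2)]^{\eta/2}[\log(\e+y^2)]^{\eta/2}$. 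Summing the crude bound over $m$ with $q_m\le Q$ uses the count \eqref{eq:q-density}: there are about $Q L^{2\beta}$ such $m$. This gives a row sum of order $\eta\, Q L^{2\beta}/(\sqrt Q\, L)$, which is far too large. So I must use that $v(t)$ varies with $t$, namely $v'(t)\asymp -t$ since $\sech(t/4)=1-t^2/32+\dots$. Writing $\cos(2\pi v(t)xy)$ in terms of $e^{\pm 2\pi i v(t) xy}$, I would integrate by parts in the variable $\tau=v(t)$. Each integration gains a factor of about $1/(xy\cdot t)$ relative to the amplitude scale. Concretely, I would split the $t$-range at $t\asymp (xy)^{-1/2}$, which is the stationary-phase scale of the phase $xy\,t^2$ that arises from $v(t)\approx 1-t^2/32$. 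On $t\gtrsim (xy)^{-1/2}$, integration by parts gives decay. The main contribution from small $t$ then behaves like
\[
 \int_0^{(xy)^{-1/2}}\frac{\dd t}{\sqrt t\,\log(1/t)^{1+\eta}}\approx \frac{(xy)^{-1/4}}{\log(xy)} .
\]
The point is that the real constant phase $\cos(2\pi xy)$ factors out, so on this range $K^{\mathrm{osc}}_\eta(x,y)\approx \cos(2\pi xy)\,F(x,y)$ with $F$ smooth and of size about $(xy)^{-1/4}/\log(xy)$, up to the Gaussian factor $\e^{-\pi u(x^2+y^2)}$ with $u\asymp t$. That Gaussian factor restricts $t\lesssim 1/(x^2+y^2)$, which is an even stronger cutoff than $(xy)^{-1/2}$. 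So I expect $|K^{\mathrm{osc}}_\eta(x,y)|\lesssim \min\bigl(\text{crude bound},\ (x^2+y^2)^{-1/2}/\log\bigr)$, and the gain has to come from the oscillation of $\cos(2\pi xy)$ across the nodes, not from the size.

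\textbf{Exploiting $\cos(2\pi \lambda_n\lambda_m)$ as an operator.} The matrix therefore looks like $\eta\, a(\lambda_n)a(\lambda_m)\cos(2\pi\lambda_n\lambda_m)$ times a smooth factor, where $a(x)\asymp [(1+x^2)^{1/2}\log(\e+x^2)]^{-1/2}$. I would treat this by a $TT^*$ argument. The operator $c\mapsto \sum_n c_n a(\lambda_n)\e^{2\pi i\lambda_n\xi}$ is a Fourier-type sum, and bounding the matrix $\bigl(a(\lambda_n)a(\lambda_m)\e^{2\pi i\lambda_n\lambda_m}\bigr)$ reduces to a large-sieve/Hilbert-type inequality for the frequencies $\lambda_n$, weighted by $a^2$, with separation $\lambda_{n+1}-\lambda_n\asymp 1/(\sqrt n L^\beta)$. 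Equivalently, I would apply the Schur test with weights $w_n=(\lambda_{n+1}-\lambda_n)^{-1/2}$ to the dyadic blocks $q_n\in[N,2N)$, $q_m\in[M,2M)$, and bound each block by Cotlar--Stein almost orthogonality. For such a block, the sum over $m$ of $\cos(2\pi\lambda_n\lambda_m)$ is an exponential sum with phase derivative $\lambda_n\cdot d\lambda_m/dm\asymp \sqrt{N/M}\,L^{-\beta}\cdot L^{\beta}$ in the variable $m$. The van der Corput or Kusmin--Landau inequality should then give cancellation, so that a block has norm about $\eta/\log$ of the block scale. The blocks that mix $N$ and $M$ far apart should decay geometrically, and the diagonal blocks should contribute $\eta L^{2\beta-1}$, which is summable over $L$ in the sense that it is $O(\eta)$ uniformly when $\beta<1/2$.

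\textbf{Main obstacle.} The hard step is the endpoint $\beta=1/2$, where the diagonal-block contributions are about $\eta$ on each of the $\log$-many dyadic scales. To recover $\sqrt\eta$, I would keep the factor $[\log]^{\eta}$ coming from the normalization together with the $[\log]^{-1-\eta}$ coming from the kernel. The block at scale $k$ then carries about $\eta k^{-1}\cdot k^{\eta}$-type weights, and a Cauchy--Schwarz argument across scales, using $\sum_k k^{-1-\eta}\asymp 1/\eta$, turns the resulting $\eta\cdot\eta^{-1/2}$ into $\sqrt\eta$. Making the exponential-sum cancellation quantitative uniformly across the dyadic blocks is what I expect to be the most delicate part.
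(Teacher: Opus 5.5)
Your reduction is sound. No bound on absolute values can work, the cancellation has to come from $\cos(2\pi\lambda_n\lambda_m)$ across the nodes through a large-sieve/Hilbert-type inequality (the paper's tool too), and replacing $\cos(2\pi v(t)xy)$ by $\cos(2\pi xy)$ is harmless; the paper instead keeps $t$ and pulls the integral outside with Minkowski's inequality.

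\textbf{The gap is in how you assemble the dyadic blocks.} Your claim that blocks coupling far-apart scales decay geometrically is false. For $q_m\ll q_n$ the amplitude $\int_0^{\e^{-2}}\e^{-t}p(t)\e^{-\pi u(t)(q_n+q_m)}t^{-1/2}[\log(1/t)]^{-1-\eta}\,\dd t$ is governed by the larger point: it is $\asymp q_n^{-1/2}[\log(\e+q_n)]^{-1-\eta}$. Your factorization $a(\lambda_n)a(\lambda_m)$ is accurate only when $q_n\asymp q_m$; used globally it would give a single column infinite norm.

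Hence, with $N=2^k$ and $L=\log(\e+N)$, any column $m$ with $2\le m<N$, restricted to the rows $N\le n<2N$, has $\ell^2$ norm $\asymp\eta L^{\beta-1}$ up to factors $L^{\pm\eta}$, however small $m$ is. So the matrix of block norms $\norm{P_kG^{\mathrm{osc}}_{\beta,\eta}P_j}$ is bounded below by roughly $\eta k^{\beta-1}\mathbf{1}_{j<k}$. That matrix is unbounded on $\ell^2$ for every $\beta>0$ (test it on $\mathbf{1}_{j\le J}$). Consequently any argument that only combines sizes of blocks, such as a block-level Schur test or Cotlar--Stein premised on geometric decay, cannot close.

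\textbf{Missing idea.} What you need is the almost orthogonality of the different column blocks inside one row block. The paper gets it by applying, for each fixed $t$, a single large-sieve inequality (Lemma~\ref{lem:large-sieve}) to all columns $m<2N$ at once. Its main term $N\sum_{m<2N}|b_m|^2$ is uniform in $m$. This gives $\norm{P_kF^-c}_2^2\lesssim\eta^2L^{2\beta-2-\eta}\sum_{m<2N}L_m^\eta|c_m|^2+\cdots$, after which one sums over $k$ and interchanges the order of summation.

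\textbf{Endpoint.} Your diagnosis of $\beta=\tfrac12$ is misplaced. The diagonal blocks are indeed $O(\eta L^{2\beta-1})$, but they act on orthogonal pieces of $c$. Together they contribute only their maximum, which is $O(\eta)$ even at $\beta=\tfrac12$.

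The numerology $\sum_k k^{-1-\eta}\asymp 1/\eta$ you invoke is the right one, but it enters through the far off-diagonal part. After the interchange above, column $m$ picks up the weight $L_m^\eta\sum_{k\gtrsim\log m}k^{2\beta-2-\eta}$, which is $\asymp\eta^{-1}$ when $\beta=\tfrac12$; this is where the $\sqrt\eta$ comes from. It is sharp: for fixed $m\ge2$, the $m$th column of $G^{\mathrm{osc}}_{1/2,\eta}$ has squared norm $\asymp\eta^2\sum_n\cos^2(2\pi\lambda_n\lambda_m)/(n[\log(\e+n)]^{1+\eta})\asymp\eta$.

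Finally, van der Corput or Kusmin--Landau bounds for the unweighted sums $\sum_m\cos(2\pi\lambda_n\lambda_m)$ cannot by themselves control an operator norm, which must hold for arbitrary coefficients $c_m$.
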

Strictly speaking, we need an estimate for $\| \off G^{\mathrm{osc}}_{\beta,\eta}\|_{\ell^2\to\ell^2}$ to finish the proof of Theorem~\ref{thm:mainmain}. However, it is plain, by \eqref{eq:fourier-exact}, \eqref{eq:fourier-integral}, and
Lemma~\ref{prop:diagonal}, that $\|\operatorname{diag}G^{\mathrm{osc}}_{\beta,\eta}\|_{\ell^2\to\ell^2}=O(\eta)$ for $0\le \beta\le\tfrac12$, so Theorem~\ref{thm:mainmain} does indeed follow from Lemma~\ref{prop:local} and Lemma~\ref{prop:osc}.

We will require Montgomery and Vaughan's version of Hilbert's inequality \cite{MV}. To this end, let
\((\xi_m)\) be a finite sequence of distinct real numbers and write
\[
 \Delta_m=\min_{r\ne m}|\xi_m-\xi_r|.
\]
By \cite[Corollary 2]{MV},
\begin{equation}
 \int_J
 \left|\sum_m b_m\e^{2\pi i\xi_mx}\right|^2\dd x
 \leq
 \left(
 |J|\sum_m|b_m|^2+3\pi
 \sum_m\Delta_m^{-1}|b_m|^2
 \right)
\label{eq:individual-gap}
\end{equation}
for every interval \(J\).  

To prepare for our application of \eqref{eq:individual-gap}, we record the following elementary sampling estimate.

\begin{lemma}\label{lem:sampling}
Let \(x_1,\ldots,x_M\) be \(h\)-separated points in an interval \(J=[a,b]\).
If \(B\) is continuously differentiable, then
\begin{equation}
 \sum_{\nu=1}^M|B(x_\nu)|^2
 \leq
 2\left(
 h^{-1}\int_{J_h}|B(x)|^2\,\dd x
 +h\int_{J_h}|B'(x)|^2\,\dd x
 \right),
\label{eq:sampling}
\end{equation}
where \(J_h\) is the interval $[a-h/2,b+h/2]$.
\end{lemma}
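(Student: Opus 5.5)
The plan is to use the standard one-dimensional local sampling argument: bound each point value $|B(x_\nu)|^2$ by an average of $|B|^2$ over a neighbouring interval plus a derivative term, then sum over the points. Separation makes the neighbouring intervals essentially disjoint. For each $\nu$ we set $I_\nu\coloneqq[x_\nu-h/2,x_\nu+h/2]$. Because the points are $h$-separated, the intervals $I_\nu$ overlap only at endpoints. Because $x_\nu\in J$, each $I_\nu$ lies in $J_h$.

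The key pointwise estimate comes from the fundamental theorem of calculus. For any $x\in I_\nu$,
\[
 B(x_\nu)=B(x)-\int_{x_\nu}^{x}B'(s)\,\dd s ,
\]
so $|B(x_\nu)|\le |B(x)|+\int_{I_\nu}|B'(s)|\,\dd s$. Squaring and using $(a+b)^2\le 2a^2+2b^2$ gives
\[
 |B(x_\nu)|^2\le 2|B(x)|^2+2\Bigl(\int_{I_\nu}|B'|\Bigr)^2 .
\]
By Cauchy--Schwarz the last term is at most $2h\int_{I_\nu}|B'|^2$. We then average in $x$ over $I_\nu$, whose length is $h$, and obtain
\[
 |B(x_\nu)|^2\le 2h^{-1}\int_{I_\nu}|B|^2+2h\int_{I_\nu}|B'|^2 .
\]

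Summing over $\nu=1,\dots,M$, the almost-disjointness of the $I_\nu$ and the inclusion $\bigcup_\nu I_\nu\subset J_h$ bound the sums of integrals by the integrals over $J_h$. This yields \eqref{eq:sampling} with exactly the constant $2$.

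There is no genuine obstacle. The only points that need care are the bookkeeping that the intervals $I_\nu$ have length exactly $h$, so that averaging produces the factor $h^{-1}$, and that the enlargement by $h/2$ on each side of $J$ is what places every $I_\nu$ inside $J_h$. The $C^1$ hypothesis is only used to justify the fundamental theorem of calculus.
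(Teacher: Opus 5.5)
Your proposal is correct and follows essentially the same argument as the paper: the fundamental theorem of calculus and Cauchy--Schwarz give $|B(x_\nu)|^2\le 2|B(x)|^2+2h\int_{I_\nu}|B'|^2$ on each interval of length $h$ centered at $x_\nu$. You then average over that interval and sum over $\nu$, using that the intervals are essentially disjoint and contained in $J_h$.
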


\begin{proof}
The intervals
\((x_\nu-h/2,x_\nu+h/2)\) are disjoint.  On each such interval, the
fundamental theorem of calculus and the Cauchy--Schwarz inequality give
\[
 |B(x_\nu)|^2
 \leq
 2|B(x)|^2+
 2h\int_{x_\nu-h/2}^{x_\nu+h/2}|B'(s)|^2\,\dd s.
\]
We now integrate with respect to $x$ over \((x_\nu-h/2,x_\nu+h/2)\) and divide by \(h\). Finally summing over \(\nu\), we get the desired bound.
\end{proof}
The following large sieve-type inequality will play a crucial role in the proof of Lemma~\ref{prop:osc}. 
\begin{lemma}
\label{lem:large-sieve}
Let \(N\geq2\), \(L\coloneqq \log(\e+N)\), and \(0<v_0\leq v\leq1\).  Then
\begin{equation}
 \begin{aligned}
 \sum_{N\leq n<2N}
 \left|
 \sum_{1\leq m<2N}
 b_m\cos(2\pi  v\lambda_n\lambda_m)
 \right|^2
 \leq C\bigg\{
 &N\sum_{m<2N}|b_m|^2\\
 &+\sqrt N\,L^\beta
 \sum_{m<2N}
 \sqrt{m+1}\,[\log(\e+m)]^\beta|b_m|^2
 \bigg\}.
 \end{aligned}
\label{eq:large-sieve}
\end{equation}
The constant $C$ is uniform for \(0\leq\beta\leq1/2\) and
\(v_0\leq v\leq1\).
\end{lemma}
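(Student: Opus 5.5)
The proof will combine the sampling estimate of Lemma~\ref{lem:sampling} with the Montgomery--Vaughan inequality \eqref{eq:individual-gap}. Write $\cos(2\pi v\lambda_n\lambda_m)=\operatorname{Re}\e^{2\pi i v\lambda_m\lambda_n}$ and set
\[
 B(x)\coloneqq \sum_{1\le m<2N} b_m \e^{2\pi i\xi_m x},\qquad \xi_m\coloneqq v\lambda_m .
\]
The left-hand side of \eqref{eq:large-sieve} is then at most $\sum_{N\le n<2N}|B(\lambda_n)|^2$. The points $\lambda_n$ with $N\le n<2N$ lie in the interval $J=[\lambda_N,\lambda_{2N}]$, whose length is $\asymp\sqrt N L^{-\beta}$. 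By \eqref{eq:lambda-spacing} they are $h$-separated with $h\asymp N^{-1/2}L^{-\beta}$.

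\textbf{Step 1: sampling.} Apply Lemma~\ref{lem:sampling} with this $h$:
\[
 \sum_n|B(\lambda_n)|^2\le 2h^{-1}\int_{J_h}|B|^2+2h\int_{J_h}|B'|^2 .
\]
Here $B'$ is an exponential sum of the same type with coefficients $2\pi i\xi_m b_m$. Since $|\xi_m|\le\lambda_{2N}\le C\sqrt N$, and $h\asymp N^{-1/2}L^{-\beta}$, the factor $h\xi_m^2$ is at most $CN^{1/2}L^{-\beta}\asymp h^{-1}L^{-2\beta}\le h^{-1}$. So both terms reduce to bounds of the form $h^{-1}\int_{J_h}|\sum_m \tilde b_m\e^{2\pi i\xi_m x}|^2\dd x$ with $|\tilde b_m|\le C|b_m|$.

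\textbf{Step 2: Hilbert's inequality.} The frequencies $\xi_m=v\lambda_m$ with $1\le m<2N$ are distinct, because $\lambda_m$ is increasing. By \eqref{eq:lambda-spacing} and $v\ge v_0$, their minimal gaps satisfy $\Delta_m\ge c\,(m+1)^{-1/2}[\log(\e+m)]^{-\beta}$. Applying \eqref{eq:individual-gap} on $J_h$, with $|J_h|\le C\sqrt N L^{-\beta}$, gives
\[
 h^{-1}\int_{J_h}|B|^2\le C\sqrt N L^{\beta}\Big(\sqrt N L^{-\beta}\sum|b_m|^2+\sum\sqrt{m+1}[\log(\e+m)]^\beta|b_m|^2\Big).
\]
This is exactly the right-hand side of \eqref{eq:large-sieve}. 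The constants are uniform in $\beta\in[0,1/2]$ and $v\in[v_0,1]$, since only $v_0$ and the spacing constants of Lemma~\ref{lem:node-geometry} enter.

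\textbf{Main obstacle.} The delicate point is the derivative term. We must check that $h\xi_m^2\lesssim h^{-1}$ holds uniformly, which uses $\lambda_{2N}^2\asymp N L^{-2\beta}$. One must also not lose a factor when passing from $B'$ back to $B$-type sums: we simply apply \eqref{eq:individual-gap} to $B'$ directly with coefficients $2\pi i\xi_m b_m$ and bound $|\xi_m|^2\le CNL^{-2\beta}$ termwise inside both sums.

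A secondary point is uniformity of the gap bound \eqref{eq:lambda-spacing} for small $m$, including $m=1$. This is handled by the explicit form of $\lambda_m$: consecutive differences are bounded below by $c(m+1)^{-1/2}[\log(\e+m)]^{-\beta}$ for all $m\ge1$ with constants uniform in $\beta\le1/2$.
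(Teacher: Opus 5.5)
Your proposal is correct and follows the paper's proof essentially step for step: you sample $B(x)=\sum_m b_m\e^{2\pi i v\lambda_m x}$ at the $h$-separated points $\lambda_n$ via Lemma~\ref{lem:sampling}, then apply the Montgomery--Vaughan inequality to both $B$ and $B'$ using $\Delta_m^{-1}\lesssim\sqrt{m+1}\,[\log(\e+m)]^\beta$. The only cosmetic difference is in the derivative terms: you fold them into the same two terms as $B$ via $h\xi_m^2\lesssim h^{-1}$, whereas the paper bounds both of them directly by $CN\sum_m|b_m|^2$.
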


\begin{proof} We may clearly prove the lemma with $\exp(2\pi i v\lambda_n\lambda_m)$ in place of $\cos(2\pi  v\lambda_n\lambda_m)$.
The points \(\lambda_n\), \(N\leq n<2N\), have separation
\begin{equation}
 h\asymp N^{-1/2}L^{-\beta}
\label{eq:sample-gap}
\end{equation}
and lie in an interval of length $T$ with
\begin{equation}
 T\asymp\sqrt N\,L^{-\beta}.
\label{eq:sample-length}
\end{equation}
We now apply Lemma~\ref{lem:sampling} to
\begin{equation}
 B(x)=\sum_{m<2N}b_m\e^{2\pi i v\lambda_mx}
 \label{eq:BB}
\end{equation}
at \(x=\lambda_n\), and then estimate the right-hand side of \eqref{eq:sampling} by means of \eqref{eq:individual-gap}. The frequencies in \eqref{eq:BB} are \(v\lambda_m\), so their
gaps are \(v\) times the gaps of \(\lambda_m\).  Since \(v\geq v_0\),
\eqref{eq:lambda-spacing} gives uniformly
\begin{equation}
 \Delta_m^{-1}
 \asymp
 \sqrt{m+1}\,[\log(\e+m)]^\beta.
\label{eq:frequency-gap}
\end{equation}
When we apply \eqref{eq:individual-gap} to the right-hand side of \eqref{eq:sampling}, we get four
terms.  The two terms coming from \(B\) are
\[
 h^{-1}T\sum_m|b_m|^2
 \leq CN\sum_m|b_m|^2
\]
and
\[
 h^{-1}\sum_m\Delta_m^{-1}|b_m|^2.
 \]
These two estimates correspond precisely to the two terms on the right-hand side of
\eqref{eq:large-sieve}.  The terms coming from \(B'\) are
\begin{equation}
 hT\sum_m\lambda_m^2|b_m|^2
 \quad\hbox{and}\quad
 h\sum_m\Delta_m^{-1}\lambda_m^2|b_m|^2.
\label{eq:derivative-terms}
\end{equation}
Since \(m<2N\), the coefficient of \(|b_m|^2\) in the first
expression is at most
\[
 hT\lambda_m^2
 \leq
 C L^{-2\beta}
 \frac{m}{[\log(\e+m)]^{2\beta}}
 \leq CN.
\]
For the second expression, \eqref{eq:frequency-gap} gives
\[
 h\Delta_m^{-1}\lambda_m^2
 \leq
 C N^{-1/2}L^{-\beta}
 m^{3/2}[\log(\e+m)]^{-\beta}
 \leq CN.
\]
Thus both expressions in \eqref{eq:derivative-terms} are bounded by
\(CN\sum_m|b_m|^2\), which is the first term on the right-hand side of \eqref{eq:large-sieve}.
\end{proof}

\begin{proof}[Proof of Lemma~\ref{prop:osc}] Rather than using the Schur test as in the preceding section, we will make a direct estimation of the action of $ G_{\beta,\eta}^{\mathrm{osc}}$ on an $\ell^2$ sequence $c$. We begin by noting that, as in the preceding case, we may remove the rows and columns associated with the points $x=0$ and $x=\tfrac12$ because they will trivially yield  a contribution   to the norm of the same size as the right-hand side of \eqref{eq:osc-operator}. For technical convenience, we also remove the row and column associated with $\lambda_1$, which again can be done with at most the same loss.  This means that we aim at estimating the norm of the matrix \(F_{\beta,\eta}\) with entries
\begin{equation}
 F_{\beta,\eta}(n,m)
 \coloneqq 
 \frac{K_\eta^{\mathrm{osc}}(\lambda_n,\lambda_m)}
 {\sqrt{K_\eta(\lambda_n,\lambda_n)
 K_\eta(\lambda_m,\lambda_m)}},
 \qquad n,m\geq2.
\label{eq:F-matrix}
\end{equation}

The matrix $F_{\beta,\eta}$ acts on sequences of the form $c_2, c_3,\ldots$. We may assume without loss of generality that all sequences are finitely supported, so that all sums appearing in the sequel are finite. We let \(P_k\) denote orthogonal projection onto the space of sequences supported on the indices $n$ with
\(2^k\leq n<2^{k+1}\), and so $I=\sum_{k\ge 1} P_k$, where $I$ is the identity map on $\ell^2$. We set for convenience \(N\coloneqq 2^k\), and write
\[
 L\coloneqq \log(\e+N),\qquad L_m\coloneqq \log(\e+m).
\]
Define the lower dyadic part \(F^-\) of $F_{\beta,\eta}$ by retaining in the $n$th row, $2^k\le n <2^{k+1}$, 
only the columns with index \(2\leq m<2N\). It suffices to estimate the norm of $F^-$ because $\| F_{\beta,\eta}\|\le 3\| F^-\|$, which follows from the decomposition
\[ F_{\beta,\eta}c=F^-c+(F^-)^\ast c-\sum_{k=1}^\infty P_k F^- P_k c, \]
where $(F^-)^\ast$ is the transpose of $F^-$ and the last term accounts for the dyadic diagonal blocks of $F_{\beta,\eta}$ appearing in both $F^-$ and $(F^-)^\ast$.

Using \eqref{eq:fourier-exact} and the definition of $K^{\mathrm{osc}}$, we see that
\begin{align} \nonumber  \norm{P_k F^- c}_{\ell^2}& =\frac12\left(\sum_{N\le n < 2N}  \left|
 \int_0^{\e^{-2}}\sum_{2\le m < 2N} c_m
 \frac{\e^{-t} p(t)\e^{-\pi u(t)(q_n+q_m)}
 \cos(2\pi v(t)\lambda_n \lambda_m)} 
 {\sqrt t\,[\log(1/t)]^{1+\eta} \sqrt{K_\eta(\lambda_n,\lambda_n)}\sqrt{K_\eta(\lambda_m,\lambda_m)}}
 \,\dd t \right|^2 \right)^{1/2} \\ 
\le  &  \frac{1}{2} \int_0^{\e^{-2}} \left(\sum_{N\le n < 2N}  \left|
 \sum_{2\le m < 2N} c_m
 \frac{e^{-\pi u(t)(q_n+q_m)}
 \cos(2\pi v(t)\lambda_n \lambda_m)} 
 { \sqrt{K_\eta(\lambda_n,\lambda_n)}\sqrt{K_\eta(\lambda_m,\lambda_m)}}
  \right|^2 \right)^{1/2} \frac{\e^{-t} p(t)}{\sqrt t\,[\log(1/t)]^{1+\eta}} \dd t ,
  \label{eq:minkowski}
  \end{align}
where we in the last step used the following vector-valued version of Minkowski's inequality:
\[ \Big(\sum_{n} \Big(\int_I \rho_n(x)\, \dd x\Big)^2\Big)^{\frac12} \le \int_I \Big(\sum_n \rho^2_n(x) \Big)^{\frac12} \dd x, \qquad \rho_n(x)\ge 0.\]

Recalling Lemma~\ref{prop:diagonal} and applying Lemma~\ref{lem:large-sieve} to \eqref{eq:minkowski}, we then get 
\begin{equation}
 \norm{P_kF^-c}_2
 \leq C\eta L^{\eta/2}
 \int_0^{\e^{-2}} \bigl[N^{1/2}A_t+N^{1/4}L^{\beta/2}B_t\bigr]
 \frac{\e^{-ctN/L^{2\beta}}}
 {\sqrt t[\log(1/t)]^{1+\eta}}\\
 \dd t ,
\label{eq:minkowski-block}
\end{equation}
where 
\[
 \begin{aligned}
 A_t^2&\coloneqq \sum_{1\le m<2N}L_m^\eta
 \e^{-2\pi u(t)q_m}|c_m|^2,\\
 B_t^2&\coloneqq \sum_{1\le m<2N}\sqrt{m+1}\,
 L_m^{\beta+\eta}\e^{-2\pi u(t)q_m}|c_m|^2.
 \end{aligned}
\]
Discarding the factors $\e^{-2\pi u(t)q_m}$ in these two sums, we are left with the bound
\begin{equation}
 \begin{aligned}
 \norm{P_kF^-c}_2
 \leq C\eta L^{\eta/2} I_N\bigg[
 & n^{1/2}
 \left(
 \sum_{1\le m<2N}L_m^\eta|c_m|^2
 \right)^{1/2}\\
 &+N^{1/4}L^{\beta/2}
 \left(
 \sum_{1\le m<2N}
 \sqrt{m+1}\,L_m^{\beta+\eta}|c_m|^2
 \right)^{1/2}
 \bigg], 
 \end{aligned}
\label{eq:block-unsquared}
\end{equation}
where 
\[
 I_N
 \coloneqq 
 \int_0^{\e^{-2}}
 \frac{\e^{-ctN/L^{2\beta}}}
 {\sqrt t\,[\log(1/t)]^{1+\eta}}\,\dd t, \]
 which satisfies the bound 
 \begin{equation} \label{eq:In} I_N  \leq
 CN^{-1/2}L^{\beta-1-\eta} \end{equation}
by \eqref{eq:fourier-integral}.
Inserting \eqref{eq:In} into \eqref{eq:block-unsquared} and squaring, we thus get
\[
 \norm{P_kF^-c}_2^2
 \leq C\eta^2\bigg[
 L^{2\beta-2-\eta}
 \sum_{m<2N}L_m^\eta|c_m|^2
 +N^{-1/2}L^{3\beta-2-\eta}
 \sum_{m<2N}
 \sqrt{m+1}\,L_m^{\beta+\eta}|c_m|^2
 \bigg].
\]
Since $\| F^-c\|_2^2   = \sum_{k=1}^\infty \norm{P_kF^-c}_2^2 $, we then get, after changing the order of summation,
\begin{equation} \label{eq:final}  \| F^-c\|_2^2 \le C \eta^2 (S_1+S_2),  \end{equation}
 where
 \begin{align*} S_1 &\coloneqq \sum_{m=1}^\infty |c_m|^2 [\log (e+m)]^\eta \sum_{k>\frac{\log m}{\log 2}-1} k^{2\beta-2-\eta} \\
 S_2& \coloneqq \sum_{m=1}^\infty 
 |c_m|^2 \sqrt{m+1}\, [\log (e+m)]^{\beta+\eta} \sum_{k>\frac{\log m}{\log 2}-1} 2^{-k/2} k^{3\beta-2-\eta} .
 \end{align*}
 We see that 
 \[ S_1 \le \begin{cases} C_{\beta} \| c\|_2^2, & 0\le \beta<\tfrac12; \\
                                       C \eta^{-1} \| c\|_2^2, &\beta=\tfrac12 \end{cases} \]
and 
\[ S_2 \le C \| c\|_2^2 \]
when $0\le  \beta \le \tfrac12.$ In view of these bounds, the desired result follows from \eqref{eq:final}.
\end{proof}

\section{The case $\beta>\tfrac12$}\label{sec:conclude}

We finally check that $G_{\beta,\eta}$ fails to be bounded on $\ell^2$ when $\beta>\tfrac12$. Here the point is that in this case, there will be too many diagonals close to the main diagonal that contribute in an essential way to the size of $G_{\beta,\eta}c$. To see this, we pick a large positive number $Q$ and consider the sequence $c=(c_n)$ with
\[ c_n=\begin{cases} 1, & Q\le q_n \le Q+1; \\
                                  0, & \text{otherwise}. \end{cases}
\]
Then 
\begin{equation}  \| c \|_{\ell^2}^2\asymp U^{2\beta}, \qquad U\coloneqq \log(\e + Q)  \label{eq:cnorm} \end{equation}
by \eqref{eq:q-spacing} which clearly holds for $\beta>\tfrac12$ as well.
We next estimate
\[ (G_{\beta,\eta}c)_m=\sum_{Q\le q_n \le Q+1} \frac{K_\eta (\lambda_n,\lambda_m)}
 {\sqrt{K_\eta(\lambda_n,\lambda_n)
 K_\eta(\lambda_m,\lambda_m)}}, \qquad Q\le q_m \le Q+1 . \]
A computation using \eqref{eq:fourier-exact} and the integral arising from Lemma~\ref{prop:filtered} shows that
\[ K^{\mathrm{non}}_\eta (\lambda_n,\lambda_m)\ge c U^{-1-\eta}, \qquad Q\le q_n,q_m \le Q+1.\]
On the other hand, using  \eqref{eq:fourier-exact} and \eqref{eq:fourier-integral}, we find that
\[ |K^{\mathrm{osc}}_\eta| (\lambda_n,\lambda_m) \le C Q^{-1/2}U^{-1-\eta}, \qquad Q\le q_n,q_m \le Q+1.\]
Taking into account also Lemma~\ref{prop:diagonal}, we therefore get
\[ |(G_{\beta,\eta}c)_m|\ge c \eta U^{2\beta-1}  \qquad Q\le q_m \le Q+1 , \]
when $Q$ is large enough. Recalling \eqref{eq:cnorm}, we then find that
\[ \frac{\| G_{\beta,\eta} c \|_{\ell^2}}{\| c \|_{\ell^2}} \ge c \eta U^{2\beta-1} \to \infty\]
when $Q\to \infty$, on the assumption that $\beta>\tfrac12$ and $\eta>0$.

We conclude that Theorem~\ref{thm:realmain} fails for $\beta>\frac12$, but the same cannot be said about the weaker assertion of Theorem~\ref{thm:main}. The above computation merely shows that our proof of  
Theorem~\ref{thm:main} does not work for any $\beta>\tfrac12$.
\section*{Acknowledgement} The authors gratefully acknowledge the assistance of OpenAI’s
ChatGPT, whose exploratory input and calculations were essential in
the development of this paper.


\begin{thebibliography}{99}

\bibitem{DLMF}
NIST Digital Library of Mathematical Functions,
\emph{Chapter 18: Orthogonal polynomials},
\url{https://dlmf.nist.gov/18},
accessed 30 July 2026.

\bibitem{AB} 
W.~O.~Amrein and A.~M.~Berthier, 
\emph{On support properties of $L^p$-functions and their Fourier transforms},
J. Funct. Anal. \textbf{24} (1977), 258--267.

\bibitem{BRS}
A. Bondarenko, D. Radchenko, and K. Seip, \emph{Fourier interpolation with zeros of zeta and L-functions},
Constr. Approx. \textbf{57} (2023), 405--461.

\bibitem{KNS}
A.~Kulikov, F.~Nazarov, and M.~Sodin,
\emph{Fourier uniqueness and non-uniqueness pairs},
J. Math. Phys. Anal. Geom. \textbf{21} (2025), 84--130,

\bibitem{MV}
H.~L. Montgomery and R.~C. Vaughan,
\emph{Hilbert's inequality},
J. London Math. Soc. (2) \textbf{8} (1974), 73--82,

\bibitem{N}
N.~K.~Nikolskii,
\emph{Treatise on the Shift Operator.
Spectral Function Theory}. With an appendix by S. V. Hru\u{s}\u{c}ev  and V. V. Peller. Translated from the Russian by Jaak Peetre.
Grundlehren Math. Wiss.[Fundamental Principles of Mathematical Sciences] \textbf{273},
Springer-Verlag, Berlin, 1986. 

\bibitem{RV}
D.~Radchenko and M.~Viazovska,
\emph{Fourier interpolation on the real line},
Publ. Math. Inst. Hautes \'Etudes Sci. \textbf{129} (2019), 51--81,


\bibitem{Thangavelu}
S.~Thangavelu,
\emph{Lectures on Hermite and Laguerre Expansions},
Mathematical Notes, vol.~42,
Princeton University Press, Princeton, 1993.

\bibitem{Zelent}
D.~Zelent,
\emph{Time-frequency localization in the Fourier symmetric Sobolev space}, 
J. Fourier Anal. Appl. \textbf{32} (2026), Paper No. 37, 22 pp.
\end{thebibliography}
\end{document}